\documentclass[12pt,a4paper]{article}
\usepackage{amsmath}
\usepackage{amsfonts}
\usepackage{amsthm}
\usepackage{amssymb}

\usepackage{enumerate}
%
%
%
%
\theoremstyle{plain}
\newtheorem{theo}{Theorem}[section]
\newtheorem{lem}[theo]{Lemma}
\newtheorem{prop}[theo]{Proposition}
%

%
%
\theoremstyle{definition}
\newtheorem{definition}[theo]{Definition}
%
%
\theoremstyle{remark}
\newtheorem{rem}[theo]{Remark}
%
%

%
%
\numberwithin{equation}{section}

%
%
%
%
%
%

\newcommand{\R}{\mathbb{R}}
\newcommand{\N}{\mathbb{N}}
\newcommand{\M}{\mathbb{M}}
\newcommand{\divrg}{\textrm{div}\,}

%
%
%
%
%

\title{Size estimates for fat inclusions in an isotropic Reissner-Mindlin plate
\thanks{The first author is supported by PRIN $2015TTJN95$ ``Identification and
monitoring of complex structural systems". The second author is supported by 
FRA 2016 ``Problemi Inversi, dalla stabilit\`a alla ricostruzione'', Universit\`a degli Studi di Trieste. The second and the third authors are supported by Progetto GNAMPA 2017 ``Analisi di problemi inversi: stabilit\`a e ricostruzione'', Istituto Nazionale di Alta Matematica (INdAM).}}
\author{Antonino Morassi\thanks{Dipartimento Politecnico di Ingegneria e Architettura,
Universit\`a degli Studi di Udine, via Cotonificio 114, 33100
Udine, Italy. E-mail: \textsf{antonino.morassi@uniud.it}}, \  Edi
Rosset\thanks{Dipartimento di Matematica e Geoscienze,
Universit\`a degli Studi di Trieste, via Valerio 12/1, 34127
Trieste, Italy. E-mail: \textsf{rossedi@univ.trieste.it}} \ and
Sergio Vessella\thanks{Dipartimento di Matematica e Informatica ``Ulisse Dini'', Universit\`a degli Studi di Firenze,Viale Morgagni 67/a,
50134 Firenze, Italy. E-mail:
\textsf{sergio.vessella@unifi.it}}}


\begin{document}
\maketitle

\noindent \textbf{Abstract.} In this paper we consider the inverse
problem of determining, within an elastic isotropic thick plate
modelled by the Reissner-Mindlin theory, the possible presence of
an inclusion made of a different elastic material. Under some a
priori assumptions on the inclusion, we deduce constructive upper
and lower estimates of the area of the inclusion in terms of a
scalar quantity related to the work developed in deforming the
plate by applying simultaneously a couple field and a transverse
force field at the boundary of the plate. The approach allows to
consider plates with boundary of Lipschitz class.

\medskip

\medskip

\noindent \textbf{Mathematical Subject Classifications (2000):}
35R30, 35R25, 73C02.

\medskip

\medskip

\noindent \textbf{Key words:} inverse problems, elastic plates,
size estimates, unique continuation.

\section{Introduction} \label{sec:intro}

The inverse problem of damage identification via non-destructive
testing has attracted increasing interest in the applied and
mathematical literature of the last years. Its applicability is
particularly suited to those cases in which a simple visual
inspection of the damaged system is not sufficient to conclude
whether the defect is present or absent and, in the former case,
how extended it is. Non-destructive tests in dynamic regime are
rather common for large full-scale structures, such as bridges or
buildings. However, in case of simple structural elements such as
plates, the mechanical systems that will be considered in this
paper, static tests are easily executable and can provide valuable
information for solving the diagnostic problem.

In most of applications on plates, an accurate model describing
the structural defect, such as diffuse cracking in reinforced
concrete plates or yielding phenomena in metallic plates, is not a
priori available. Therefore, the defected plate is usually
modelled by introducing a variation of the elastic properties of
the material in an unknown subregion $D$ (\textit{inclusion}) of
the mid-surface $\Omega$ of the plate. Under the assumption that
the reference undamaged configuration of the plate is known, the
inverse problem is reduced to the determination of the inclusion
$D$ by comparing the results of boundary static tests executed on
the reference specimen (with $D= \varnothing$) and on the possibly
defected plate.

This appears to be a difficult inverse problem and a
general uniqueness result has not been obtained yet. Partial
answers have been given in the last ten years for thin elastic
plates described by the Kirchhoff-Love theory by pursuing a
relative modest, but realistic goal: to estimate the \textit{area}
of the unknown inclusion $D$ {}from a single static experiment.
More precisely, it was supposed to apply a given couple field
$\widehat{M}$ at the boundary $\partial \Omega$ of the plate in
the reference and in a possibly defected state, and to evaluate
the work $W_0$, $W$ exerted in deforming the undamaged and
defected specimen, respectively. Constructive estimates, {}from
above and {}from below, of $area(D)$ in terms of the difference
$|W_0 -W|$ were determined for Kirchhoff-Love elastic plates when
the background material is isotropic \cite{MRV07-IUMJ} or belongs
to a suitable class of anisotropy \cite{DiCLMRVW13-IP}. Extensions
to the limit cases of rigid inclusions and cavities were also
established \cite{MRV13-JIIPP}. Analogous results were derived for
size estimates of inclusions in shell structures (i.e., curved
Kirchhoff-Love plates) \cite{DiCLW13-ASNSP}, \cite{DiCLVW13-SIMA}.
For the sake of completeness we recall that the size estimates
approach traces back to the paper by Friedman \cite{Fri87-SIPA}
where, assuming that the measure of the possible inclusion in a
conducting body is a-priori known, a criterion was given to decide
{}from a single boundary measurement of current and corresponding
voltage whether the inclusion is present of not. Subsequently, the
method has been developed in \cite{Al-Ro}, \cite{KSS97-SIMA} and
\cite{ARS00-PAMS}, and extended also to the detection of
inclusions in elastic bodies \cite{Ik98-JIIPP}, \cite{Al-Mo-Ro02}.
Finally, we mention an interesting approach to size estimates
developed in \cite{KaKiMi12}, \cite{KaMi13} and in \cite{MiNg12}
where the translation method and the splitting method were
introduced, respectively.

All the available size estimates results for plate-like systems
have been obtained using the Kirchhoff-Love mechanical model of
plate, that is assuming that the material fibre initially
orthogonal to the mid-surface of the plate remains straight and
perpendicular to the mid-surface during deformation. Experiments
and numerical simulations show that this mechanical model
accurately describes the behavior of thin plates, whereas it
definitely looses precision as the thickness of the plate
increases. Specifically, when the thickness reaches the order of
one tenth the planar dimensions, the plates should be described by
means of an extension of the Kirchhoff-Love model, namely the
Reissner-Mindlin model \cite{Rei45}, \cite{Min51}, that takes into
account also the shear deformations through the thickness of the
plate. Moreover, it should be recalled that size estimates for the
Kirchhoff-Love plate model were derived under the a priori
condition that the mid-surface $\Omega$ is highly regular. This
technical assumption obstructs, for example, the application of
the size estimates to rectangular plates, in spite of their
frequent use in practical applications. In this paper, both the
two above mentioned limitations of the existing theory are
removed, and the size estimates approach is extended to the
Reissner-Mindlin model of plates with boundary $\partial \Omega$
of Lispchitz class.

Let us formulate our problem in mathematical terms. Let $D$, $D
\subset \subset \Omega$, be the subdomain of the mid-surface
$\Omega$ occupied by the inclusion, and denote by $h$ the constant
thickness of the plate. A transverse force field $\overline{Q}$
and a couple field $\overline{M}$ are supposed to be acting at the
boundary $\partial \Omega$ of the plate. Working in the framework
of the Reissner-Mindlin theory, at any point $x=(x_1,x_2) \in
\Omega$, we denote by $w=w(x)$ and by $\omega_\alpha =
\omega_\alpha (x)$, $\alpha =1,2$, the infinitesimal transverse
displacement at $x$ and the infinitesimal rotation of the
transverse material fibre through $x$, respectively. The pair
$(\varphi,w)$, with $\varphi_1=\omega_2$, $\varphi_2= -\omega_1$,
satisfies the Neumann boundary value problem
\begin{center}
\( {\displaystyle \!\!\!\!\!\!\!\!\!\!\!\!\left\{
\begin{array}{lr}
     \mathrm{\divrg}( (\chi_{\Omega\setminus D}S + \chi_D \widetilde{S}) (\varphi+\nabla
     w))=0,
      & \mathrm{in}\ \Omega,
        \vspace{0.25em}\\
      \mathrm{\divrg}( (\chi_{\Omega\setminus D} {\mathbb P} + \chi_D  \widetilde{{\mathbb P}} ) \nabla \varphi)-
      (\chi_{\Omega\setminus D}S + \chi_D \widetilde{S})(\varphi+\nabla w)=0, & \mathrm{in}\ \Omega,
          \vspace{0.25em}\\
      (S(\varphi+\nabla w))\cdot n= \overline{Q},
      & \mathrm{on}\ \partial \Omega,
        \vspace{0.25em}\\
      ({\mathbb P}\nabla \varphi) n = \overline{M}, &\mathrm{on}\ \partial
      \Omega,
          \vspace{0.25em}\\
\end{array}
\right. } \) \vskip -7.2em
\begin{eqnarray}
& & \label{eq:I-6-1}\\
& & \label{eq:I-6-2}\\
& & \label{eq:I-6-3}\\
& & \label{eq:I-6-4}
\end{eqnarray}
\end{center}
where $\chi_A$ denotes the characteristic function of the set $A$
and $n$ is the unit outer normal to $\partial \Omega$. In the
above equations, $(S,{\mathbb P})$ and
$(\widetilde{S},\widetilde{{\mathbb P}})$ are the second-order
shearing tensor and the fourth-order bending tensor of the
reference and defected plate, respectively.  The work exerted by
the boundary loads $(\overline{Q}, \overline{M})$ is denoted by
\begin{equation}
  \label{eq:I-7-1}
  W = \int_{\partial \Omega} \overline{Q}w + \overline{M}\cdot
  \varphi.
\end{equation}
When the inclusion $D$ is absent, the equilibrium problem
\eqref{eq:I-6-1}--\eqref{eq:I-6-4} becomes
\begin{center}
\( {\displaystyle \left\{
\begin{array}{lr}
     \mathrm{\divrg}( S(\varphi_0+\nabla w_0))=0,
      & \mathrm{in}\ \Omega,
        \vspace{0.25em}\\
      \mathrm{\divrg}( {\mathbb P}  \nabla \varphi_0)-S(\varphi_0+\nabla w_0)=0, & \mathrm{in}\ \Omega,
          \vspace{0.25em}\\
      (S(\varphi_0+\nabla w_0))\cdot n= \overline{Q},
      & \mathrm{on}\ \partial \Omega,
        \vspace{0.25em}\\
      ({\mathbb P}\nabla \varphi_0) n = \overline{M}, &\mathrm{on}\ \partial
      \Omega,
          \vspace{0.25em}\\
\end{array}
\right. } \) \vskip -7.2em
\begin{eqnarray}
& & \label{eq:I-7-2}\\
& & \label{eq:I-7-3}\\
& & \label{eq:I-7-4}\\
& & \label{eq:I-7-5}
\end{eqnarray}
\end{center}
where $(\varphi_0, w_0)$ is the deformation of the reference
plate. The corresponding work exerted by the boundary loads is
given by
\begin{equation}
  \label{eq:I-7-6}
  W_0 = \int_{\partial \Omega} \overline{Q}w_0 + \overline{M}\cdot
  \varphi_0.
\end{equation}
The first step towards the determination of the size estimates of
the area of the inclusion consists in proving that the strain
energy of the reference plate stored in the region $D$ is
comparable with the difference between the works exerted by the
boundary load fields in deforming the plate with and without the
inclusion. Under suitable assumptions on the jumps
$(\widetilde{{\mathbb P}} - {\mathbb P})$ and $(\widetilde{S}-S)$
of the elastic coefficients between the defected region $D$ and
the surrounding background material, and using the ellipticity of
the tensors $S$ and ${\mathbb P}$, the above property can be
stated as
\begin{equation}
  \label{eq:I-8-1}
  K_1 \int_D |\widehat{\nabla} \varphi_0|^2 + |\varphi_0 + \nabla
  w_0|^2 \leq
  |W-W_0|
  \leq
     K_2 \int_D |\widehat{\nabla} \varphi_0|^2 + |\varphi_0 + \nabla
  w_0|^2,
\end{equation}
for suitable positive constants $K_1$, $K_2$ only depending on the
data. Here, $\widehat{\nabla} \varphi_0  = \frac{1}{2} (\nabla
\varphi_0 + (\nabla \varphi_0)^T)$. We refer to Lemma
\ref{lem:energy} for the precise statement.

The lower bound for $area(D)$ follows {}from the right hand side
of \eqref{eq:I-8-1} and {}from regularity estimates for
the solution $(\varphi_0, w_0)$ to
\eqref{eq:I-7-2}--\eqref{eq:I-7-5}. It should be noticed that such
regularity estimates hold true also for anisotropic background
material, provided that the tensors ${\mathbb P}$ and $S$ have
suitable regularity.

In order to obtain the upper bound for $area(D)$, an estimate
{}from below of the strain energy expression appearing on the left
hand side of \eqref{eq:I-8-1} is needed. This issue is rather
technical and involves the determination of quantitative estimates
of unique continuation for the strain energy of the solution
$(\varphi_0, w_0)$ to the reference plate problem.

In this paper we assume that the inclusion $D$ satisfies the
\textit{fatness condition}
\begin{equation}
  \label{eq:I-9-1}
  area \left (
  \{
  x \in D \ | \ dist(x, \partial D) > h_1 \} \right )
  \geq \frac{1}{2} area(D),
\end{equation}
for a given positive number $h_1$. Under the assumption of
isotropic material, and requiring suitable regularity of the
tensors ${\mathbb P}$ and $S$, we shall prove a three spheres
inequality for the strain energy density $(|\widehat{\nabla}
\varphi_0|^2 + |\varphi_0 + \nabla w_0|^2)$ of the solution
$(\varphi_0, w_0)$ to \eqref{eq:I-6-1}--\eqref{eq:I-6-4}, see
Theorem \ref{theo:three-sphere_energy}. This three spheres
inequality for the energy strongly relies on a three spheres
inequality for $(|\varphi_0|^2 + |w_0|^2)$, with optimal exponent,
and on a generalized Korn inequality, both derived in
\cite{MRV16}. Our main result (see Theorem \ref{theo:D-fat})
states that if, for a given $h_1
>0$, the fatness-condition \eqref{eq:I-9-1} holds, and some a
priori assumptions on the unknown inclusion are satisfied, then
\begin{equation}
  \label{eq:I-10-1}
  C_1 \left | \frac{W-W_0}{W_0} \right | \leq area(D) \leq    C_2 \left | \frac{W-W_0}{W_0} \right
  |,
\end{equation}
where the constants $C_1$, $C_2$ only depend on the a priori data.
Clearly, the lower bound for $area(D)$ in \eqref{eq:I-10-1} continues to hold
even if the inclusion $D$ does not satisfy the fatness condition
\eqref{eq:I-9-1}.

The paper is organized as follows. Section $2$ collects some
notation. The formulation of the inverse problem is provided in
Section $3$, together with our main result (Theorem
\ref{theo:D-fat}). Section $4$ contains quantitative estimates of
unique continuation in the form of three spheres inequality
(Theorem \ref{theo:three-sphere_energy}) and Lipschitz propagation
of smallness property (Theorem \ref{theo:LPS}) for the strain
energy density of solutions to the Neumann problem for the
reference plate. The proof of Theorem \ref{theo:D-fat} is
presented in Section $5$, whereas Section $6$ is devoted to the
proof of Theorem \ref{theo:three-sphere_energy}.

\section{Notation} \label{sec:notation}

Let $P=(x_1(P), x_2(P))$ be a point of $\R^2$.
We shall denote by $B_r(P)$ the disk in $\R^2$ of radius $r$ and
center $P$ and by $R_{a,b}(P)$ the rectangle
$R_{a,b}(P)=\{x=(x_1,x_2)\ |\ |x_1-x_1(P)|<a,\ |x_2-x_2(P)|<b \}$. To simplify the notation,
we shall denote $B_r=B_r(O)$, $R_{a,b}=R_{a,b}(O)$.

\begin{definition}
  \label{def:2.1} (${C}^{k,1}$ regularity)
Let $\Omega$ be a bounded domain in ${\R}^{2}$. Given $k\in\N$, we
say that a portion $\Sigma$ of $\partial \Omega$ is of
\textit{class ${C}^{k,1}$ with constants $\rho_{0}$, $M_{0}>0$},
if, for any $P \in \Sigma$, there exists a rigid transformation of
coordinates under which we have $P=O$ and
\begin{equation*}
  \Omega \cap R_{\rho_0,M_0\rho_0}=\{x=(x_1,x_2) \in R_{\rho_0,M_0\rho_0}\quad | \quad
x_{2}>\psi(x_1)
  \},
\end{equation*}
where $\psi$ is a ${C}^{k,1}$ function on
$\left(-\rho_0,\rho_0\right)$ satisfying
\begin{equation*}
\psi(0)=0,
\end{equation*}
\begin{equation*}
\psi' (0)=0, \quad \hbox {when } k \geq 1,
\end{equation*}
\begin{equation*}
\|\psi\|_{{C}^{k,1}\left(-\frac{\rho_0}{M_0},\frac{\rho_0}{M_0}\right)} \leq M_{0}\rho_{0}.
\end{equation*}

\medskip
\noindent When $k=0$ we also say that $\Sigma$ is of
\textit{Lipschitz class with constants $\rho_{0}$, $M_{0}$}.
\end{definition}
\begin{rem}
  \label{rem:2.1}
  We use the convention to normalize all norms in such a way that their
  terms are dimensionally homogeneous with the $L^\infty$ norm and coincide with the
  standard definition when the dimensional parameter equals one, see \cite{MRV07-IUMJ} for details.
\end{rem}
For any $t>0$ we denote
\begin{equation}
  \label{eq:2.int_env}
  \Omega_{t}=\{x \in \Omega \mid \hbox{dist}(x,\partial
  \Omega)>t
  \}.
\end{equation}
Given a bounded domain $\Omega$ in $\R^2$ such that $\partial
\Omega$ is of class $C^{k,1}$, with $k\geq 0$, we consider as
positive the orientation of the boundary induced by the outer unit
normal $n$ in the following sense. Given a point
$P\in\partial\Omega$, let us denote by $\tau=\tau(P)$ the unit
tangent at the boundary in $P$ obtained by applying to $n$ a
counterclockwise rotation of angle $\frac{\pi}{2}$, that is
$\tau=e_3 \times n$, where $\times$ denotes the vector product in
$\R^3$, $\{e_1, e_2\}$ is the canonical basis in $\R^2$ and
$e_3=e_1 \times e_2$.

We denote by $\mathbb{M}^2$ the space of $2 \times 2$ real valued
matrices and by ${\mathcal L} (X, Y)$ the space of bounded linear
operators between Banach spaces $X$ and $Y$.

For every $2 \times 2$ matrices $A$, $B$ and for every $\mathbb{L}
\in{\mathcal L} ({\mathbb{M}}^{2}, {\mathbb{M}}^{2})$, we use the
following notation:
\begin{equation}
  \label{eq:2.notation_1}
  ({\mathbb{L}}A)_{ij} = L_{ijkl}A_{kl},
\end{equation}
\begin{equation}
  \label{eq:2.notation_2}
  A \cdot B = A_{ij}B_{ij}, \quad |A|= (A \cdot A)^{\frac {1}
  {2}}, \quad tr(A)=A_{ii},
\end{equation}
\begin{equation}
  \label{eq:2.notation_3}
  (A^T)_{ij}=A_{ji}, \quad \widehat{A} = \frac{1}{2}(A+A^T).
\end{equation}
Notice that here and in the sequel summation over repeated indexes
is implied.

\section{The inverse problem} \label{sec:inverseproblem}

Let us consider a plate, with constant thickness $h$, represented
by a bounded domain $\Omega$ in $\R^2$ having boundary of
Lipschitz class, with constants $\rho_0$ and $M_0$, and
satisfying
\begin{equation}
  \label{eq:Omegabound}
    \hbox{diam}(\Omega)\leq M_1\rho_0,
\end{equation}
\begin{equation}
  \label{eq:ballinsideOmega}
    B_{s_0\rho_0}(x_0)\subset\Omega,
\end{equation}
for some $s_0>0$ and $x_0\in \Omega$. Moreover, we assume that for
$r<h_0 \rho_0$, where $h_0 >0$ only depends on $M_0$, the domain
\begin{equation}
  \label{eq:OmegarhoLip}
    \Omega_r \ \hbox{is of Lipschitz class with constants } \rho_0,
    M_0.
\end{equation}
Condition \eqref{eq:OmegarhoLip} has been introduced to simplify
the arguments. However, it should be noticed that it is a rather
natural assumption, for instance trivially satisfied for polygonal
plates.

The \textit{reference} plate is assumed to be made by linearly
elastic isotropic material with Lam\'{e} moduli $\lambda$ and
$\mu$ satisfying the ellipticity conditions
\begin{equation}
  \label{eq:Lame-ell}
    \mu(x)\geq \alpha_0, \quad 2\mu(x)+3\lambda(x)\geq \gamma_0,
    \quad \hbox{in } \overline{\Omega},
\end{equation}
for given positive constants $\alpha_0$, $\gamma_0$, and the
regularity condition
\begin{equation}
  \label{eq:Lame-reg}
    \|\lambda\|_{C^{0,1}(\overline{\Omega})}+\|\mu\|_{C^{0,1}(\overline{\Omega})}\leq
    \alpha_1,
\end{equation}
where $\alpha_1$ is a given constant. Therefore, the shearing and
bending plate tensors take the form
\begin{equation}
  \label{eq:shearing-tensor}
    SI_2, \quad S=h\mu, \quad S \in
    C^{0,1}(\overline{\Omega}),
\end{equation}
\begin{equation}
  \label{eq:bending-tensor}
    {\mathbb P} A = B\left[(1-\nu)\widehat{A}+\nu tr(A)I_2\right],
    \quad {\mathbb P} \in
    C^{0,1}(\overline{\Omega}),
\end{equation}
where $I_2$ is the two-dimensional unit matrix, $A$ denotes a $2
\times 2$ matrix and
\begin{equation}
  \label{eq:Bending}
    B=\frac{Eh^3}{12(1-\nu^2)},
\end{equation}
with
\begin{equation}
  \label{eq:Young-Poisson}
   E=\frac{\mu(2\mu+3\lambda)}{\mu+\lambda}, \quad
   \nu=\frac{\lambda}{2(\mu+\lambda)}.
\end{equation}
By \eqref{eq:Lame-ell} and \eqref{eq:Lame-reg}, we have
\begin{equation}
  \label{eq:convex-S-Lame}
    h \sigma_0 \leq S \leq h \sigma_1, \quad \hbox{in } \overline{\Omega},
\end{equation}
and
\begin{equation}
  \label{eq:convex-P-Lame}
     \frac{h^3}{12} \xi_0 | \widehat{A} |^2 \leq {\mathbb P}A \cdot A \leq \frac{h^3}{12} \xi_1 | \widehat{A} |^2, \quad \hbox{in } \overline{\Omega},
\end{equation}
for every $2\times 2$ matrix $A$, where
\begin{equation}
  \label{eq:constants_dependence}
     \sigma_0 =\alpha_0, \quad \sigma_1 =\alpha_1, \quad \xi_0=\min\{2\alpha_0, \gamma_0\}, \quad
        \xi_1=2\alpha_1.
\end{equation}
Moreover,
\begin{equation}
  \label{eq:lip_operatori}
   \|S\|_{ C^{0,1}(\overline{\Omega})} \leq h\alpha_1, \qquad
     \|{\mathbb P}\|_{C^{0,1}(\overline{\Omega})} \leq Ch^3,
\end{equation}
with $C>0$ only depending on $\alpha_0$, $\alpha_1$,
$\gamma_0$.

Let the plate be subject to a transverse force field
$\overline{Q}$ and a couple field $\overline{M}$ acting on the
boundary $\partial \Omega$, and such that
\begin{equation}
  \label{eq:compatibilita-carico}
     \int_{\partial \Omega} \overline{Q}=0, \quad  \int_{\partial
     \Omega}(\overline{Q}x -\overline{M})=0,
\end{equation}
\begin{equation}
  \label{eq:regolarita-carico}
    \overline{Q} \in H^{-\frac{1}{2}}(\partial \Omega), \quad \overline{M} \in H^{-\frac{1}{2}}(\partial \Omega,
    \R^2).
\end{equation}
Under the above assumptions, the static equilibrium of the
reference plate is described within the Reissner-Mindlin theory by
the following Neumann boundary value problem
\begin{center}
\( {\displaystyle \left\{
\begin{array}{lr}
     \mathrm{\divrg}(S(\varphi_0+\nabla w_0))=0
      & \mathrm{in}\ \Omega,
        \vspace{0.25em}\\
      \mathrm{\divrg}({\mathbb P}\nabla \varphi_0)-S(\varphi_0+\nabla w_0)=0, & \mathrm{in}\ \Omega,
          \vspace{0.25em}\\
      (S(\varphi_0+\nabla w_0))\cdot n= \overline{Q},
      & \mathrm{on}\ \partial \Omega,
        \vspace{0.25em}\\
      ({\mathbb P}\nabla \varphi_0) n = \overline{M}, &\mathrm{on}\ \partial
      \Omega.
          \vspace{0.25em}\\
\end{array}
\right. } \) \vskip -7.5em
\begin{eqnarray}
& & \label{eq:intro-1}\\
& & \label{eq:intro-2}\\
& & \label{eq:intro-3}\\
& & \label{eq:intro-4}
\end{eqnarray}
\end{center}
Concerning the well-posedness of the above problem, it was proved
in \cite{MRV16} (Proposition $5.2$) that the problem
\eqref{eq:intro-1}--\eqref{eq:intro-4} admits a weak solution
$(\varphi_0, w_0) \in H^1(\Omega, \R^2) \times H^1(\Omega)$, that
is
\textit{for every} $\psi\in H^1(\Omega, \R^2)$ \textit{and for
every} $v\in H^1(\Omega)$,
\begin{equation}
  \label{eq:weak_form}
    \int_\Omega {\mathbb P}\nabla \varphi_0\cdot \nabla \psi + \int_\Omega S(\varphi_0+\nabla w_0)\cdot
        (\psi+\nabla v)=\int_{\partial\Omega}\overline{Q} v + \overline{M}\cdot \psi.
\end{equation}
The solution $(\varphi_0, w_0)$ can be uniquely identified
provided it satisfies the normalization conditions
\begin{equation}
  \label{eq:normalizzazione}
    \int_\Omega \varphi_0 =0, \quad \int_\Omega w_0=0.
\end{equation}
For this normalized solution, the following stability estimate holds
\begin{equation}
  \label{eq:dir7}
    \|\varphi_0\|_{H^1(\Omega)} + \frac{1}{\rho_0}\|w_0\|_{H^1(\Omega)}
        \leq \frac{C}{\rho_0^2}\left(\|\overline{M}\|_{H^{-\frac{1}{2}}(\partial\Omega)}+\rho_0
        \|\overline{Q}\|_{H^{-\frac{1}{2}}(\partial\Omega)}\right),
\end{equation}
where the constant $C>0$ only depends on $M_0$, $M_1$, $s_0$,
$\alpha_0$, $\alpha_1$, $\gamma_0$ and $\frac{\rho_0}{h}$.

\begin{rem}
Existence, uniqueness and $H^1$-stability for the Neumann problem
\eqref{eq:intro-1}--\eqref{eq:intro-4} can be proved for generic
anisotropic linearly elastic material with bounded shearing and
bending plate tensors satisfying suitable ellipticity conditions,
see Proposition $5.2$ in \cite{MRV16} for details. In fact, the
additional hypotheses of isotropy and regularity we have required
on the elastic coefficients are needed to obtain the key
quantitative estimate of unique continuation of the solution
$(\varphi_0, w_0)$ in the form of the three spheres inequality
\eqref{eq:three-sphere-1}.
\end{rem}
The inclusion $D$ is assumed to be a measurable, possibly
disconnected subset of $\Omega$ satisfying
\begin{equation}
  \label{eq:incl-interna}
  \hbox{dist}(D, \partial \Omega) \geq d_{0}\rho_0,
\end{equation}
where $d_0$ is a positive constant. The shearing and bending
tensors of the plate with the inclusion are denoted by
$(\chi_{\Omega\setminus D} S + \chi_D \widetilde{S})$,
$(\chi_{\Omega\setminus D} {\mathbb P} + \chi_D {\mathbb
{\widetilde{P}}})$, where $\chi_D$ is the characteristic function
of $D$ and $\widetilde{{S}} \in L^\infty(\Omega, {\M}^{2})$,
$\mathbb {\widetilde{P}} \in L^\infty(\Omega, {\cal L} ({\M}^{2},
{\M}^{2}))$. Differently {}from the surrounding material, no
isotropy condition is introduced on the inclusion $D$, and the
tensors $\widetilde{{S}}$, $\widetilde{{\mathbb P}}$ are requested
to satisfy the following properties:
\begin{enumerate}[i)]
\item \textit{Minor and major symmetry conditions}
\begin{equation}
  \label{eq:anto-11.1}
    \widetilde{S}_{\alpha \beta} = \widetilde{S}_{\beta \alpha}, \quad \alpha, \beta =1,2,
    \ \
    \hbox{a.e. in }  \Omega,
\end{equation}
\begin{equation}
  \label{eq:anto-11.2}
    \widetilde{P}_{\alpha \beta \gamma \delta}= \widetilde{P}_{\beta \alpha \gamma \delta}= \widetilde{P}_{\alpha \beta \delta \gamma}=\widetilde{P}_{\gamma \delta \alpha \beta}, \quad \alpha, \beta, \gamma,
    \delta=1,2, \ \ \hbox{a.e. in } \Omega.
\end{equation}
\item \textit{Bounds on the jumps $\widetilde {S}-{S}$,
$\widetilde {\mathbb P}-{\mathbb P}$ and uniform strong convexity
for $\widetilde{S}$ and $\widetilde{{\mathbb P}}$}

Either there exist $\eta>0$ and $\delta>1$ such that
\begin{equation}
  \label{eq:4.jump+bis}
  \eta{S} \leq \widetilde {S}-{S} \leq (\delta-1){S},  \quad \hbox{a.e. in } \Omega,
  \end{equation}
\begin{equation}
  \label{eq:4.jump+}
  \eta{\mathbb P} \leq \widetilde {\mathbb P}-{\mathbb P} \leq (\delta-1){\mathbb
  P},  \quad \hbox{a.e. in } \Omega,
  \end{equation}
or there exist $\eta>0$ and $0<\delta<1$ such that
\begin{equation}
  \label{eq:4.jump-bis}
  -(1-\delta){S} \leq \widetilde {S}-S \leq -\eta S,  \quad \hbox{a.e. in
  }\Omega,
  \end{equation}
\begin{equation}
  \label{eq:4.jump-}
  -(1-\delta){\mathbb P} \leq \widetilde {\mathbb P}-{\mathbb P} \leq -\eta{\mathbb
  P},  \quad \hbox{a.e. in }\Omega.
  \end{equation}
\end{enumerate}
As a further a priori information, let ${\mathcal F}>0$ be the
following ratio of norms of the boundary data
\begin{equation}
  \label{eq:frequency}
    {\mathcal F} = \frac{\|\overline{M}\|_{H^{-1/2}(\partial \Omega)}+\rho_0
\|\overline{Q}\|_{H^{-1/2}(\partial \Omega)}}
{\|\overline{M}\|_{H^{-1}(\partial \Omega)}+\rho_0
\|\overline{Q}\|_{H^{-1}(\partial \Omega)}}.
\end{equation}
Under the above assumptions, the equilibrium problem for the plate
with the inclusion $D$ is as follows
\begin{center}
\( {\displaystyle \!\!\!\!\!\!\!\!\!\!\!\!\left\{
\begin{array}{lr}
     \mathrm{\divrg}( (\chi_{\Omega\setminus D}S + \chi_D \widetilde{S}) (\varphi+\nabla
     w))=0,
      & \mathrm{in}\ \Omega,
        \vspace{0.25em}\\
      \mathrm{\divrg}( (\chi_{\Omega\setminus D} {\mathbb P} + \chi_D  \widetilde{{\mathbb P}} ) \nabla \varphi)-
      (\chi_{\Omega\setminus D}S + \chi_D \widetilde{S})(\varphi+\nabla w)=0, & \mathrm{in}\ \Omega,
          \vspace{0.25em}\\
      (S(\varphi+\nabla w))\cdot n= \overline{Q},
      & \mathrm{on}\ \partial \Omega,
        \vspace{0.25em}\\
      ({\mathbb P}\nabla \varphi) n = \overline{M}, &\mathrm{on}\ \partial
      \Omega.
          \vspace{0.25em}\\
\end{array}
\right. } \) \vskip -7.5em
\begin{eqnarray}
& & \label{eq:pbm-with-1}\\
& & \label{eq:pbm-with-2}\\
& & \label{eq:pbm-with-3}\\
& & \label{eq:pbm-with-4}
\end{eqnarray}
\end{center}
Problem \eqref{eq:pbm-with-1}--\eqref{eq:pbm-with-4} has a unique
solution $(\varphi, w) \in H^1(\Omega, \R^2) \times H^1(\Omega)$
satisfying the normalization conditions
\eqref{eq:normalizzazione}.

Finally, we introduce the works exerted by the boundary loads when
the inclusion is present or absent, respectively:
\begin{equation}
  \label{eq:work-with-D}
  W = \int_{\partial \Omega} \overline{Q}w + \overline{M}\cdot
  \varphi,
\end{equation}
\begin{equation}
  \label{eq:work-without-D}
  W_0 = \int_{\partial \Omega} \overline{Q}w_0 + \overline{M}\cdot
  \varphi_0.
\end{equation}
Our main theorem is as follows.
\begin{theo}
  \label{theo:D-fat}
  Let $\Omega$ be a bounded domain in $\R^{2}$, such that $\partial \Omega$
is of $C^{0,1}$ class with constants $\rho_{0}, M_{0}$ and
satisfying \eqref{eq:Omegabound}--\eqref{eq:OmegarhoLip}. Let $D$
be a measurable subset of $\Omega$ satisfying
\eqref{eq:incl-interna} and
\begin{equation}
  \label{eq:D-fat}
  \left| D_{h_{1}\rho_0} \right| \geq \frac {1} {2} \left| D \right|,
  \end{equation}
for a given positive constant $h_{1}$. Let the reference plate be
made by linearly elastic isotropic material with Lam\'{e} moduli
$\lambda$, $\mu$ satisfying \eqref{eq:Lame-ell},
\eqref{eq:Lame-reg}, and denote by $S$, ${\mathbb P}$ the
corresponding shearing and bending tensors given in
\eqref{eq:shearing-tensor}, \eqref{eq:bending-tensor},
respectively. The shearing tensor $\widetilde{{S}} \in
L^\infty(\Omega,{\M}^{2})$ and the bending tensor
$\widetilde{{\mathbb P}} \in L^\infty(\Omega,{\cal L} ({\M}^{2},
{\M}^{2}))$ of the inclusion $D$ are assumed to satisfy the
symmetry conditions \eqref{eq:anto-11.1}, \eqref{eq:anto-11.2}.

If \eqref{eq:4.jump+bis} and \eqref{eq:4.jump+} hold, then we have
\begin{equation}
  \label{eq:4.size_estimate+}
  \frac {1} {\delta-1} C^{+}_{1}\rho_0^2
  \frac{W_0-W}{W_0}
  \leq
  |D|
  \leq
  \frac {\delta} {\eta} C^{+}_{2}\rho_0^2
  \frac{W_0-W}{W_0}
  .
  \end{equation}

If, conversely, \eqref{eq:4.jump-bis} and \eqref{eq:4.jump-} hold,
then we have
\begin{equation}
  \label{eq:4.size_estimate-}
  \frac {\delta} {1-\delta} C^{-}_{1}\rho_0^2
  \frac{W-W_0}{W_0}
  \leq
  |D|
  \leq
  \frac {1} {\eta} C^{-}_{2}\rho_0^2
  \frac{W-W_0}{W_0}
  ,
  \end{equation}
where $C^{+}_{1}$, $C^{-}_{1}$ only depend on $M_{0}$, $M_1$,
$s_0$, $\frac{\rho_0}{h}$, $d_{0}$, $\alpha_{0}$, $\alpha_{1}$,
$\gamma_{0}$, whereas $C^{+}_{2}$, $C^{-}_{2}$ only depend on
$M_{0}$, $M_1$, $s_0$, $\frac{\rho_0}{h}$, $\alpha_{0}$,
$\alpha_{1}$, $\gamma_{0}$, $h_{1}$ and ${\mathcal F}$.
\end{theo}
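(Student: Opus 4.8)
The argument follows the classical size estimates scheme, here adapted to the Reissner-Mindlin system and to domains of Lipschitz class. It rests on three ingredients. First, the energy comparison inequality of Lemma~\ref{lem:energy}, i.e.
$K_1\int_D E\le|W_0-W|\le K_2\int_D E$, where $E:=|\widehat{\nabla}\varphi_0|^2+|\varphi_0+\nabla w_0|^2$ and the constants $K_1$, $K_2$ carry the factors $\eta$, $\delta$ dictated by \eqref{eq:4.jump+bis}--\eqref{eq:4.jump-}. Second, interior regularity estimates for the reference solution $(\varphi_0,w_0)$ of \eqref{eq:intro-1}--\eqref{eq:intro-4}, available because $D\subset\subset\Omega$ and $S$, ${\mathbb P}$ are Lipschitz, hence H\"older, continuous. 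Third, the quantitative unique continuation results of Section~4, namely the three spheres inequality for the strain energy density (Theorem~\ref{theo:three-sphere_energy}) and the Lipschitz propagation of smallness derived from it (Theorem~\ref{theo:LPS}). We may assume $|D|>0$, the case $|D|=0$ being trivial since then $W=W_0$. Working with the normalised solution \eqref{eq:normalizzazione}, the ellipticity bounds \eqref{eq:convex-S-Lame}--\eqref{eq:convex-P-Lame} give $c\,W_0\le\int_\Omega E\le C\,W_0$, while the generalised Korn inequality of \cite{MRV16} and Poincar\'e's inequality make $\|\varphi_0\|_{H^1(\Omega)}^2+\rho_0^{-2}\|w_0\|_{H^1(\Omega)}^2$ comparable to $\int_\Omega E$; in particular $W_0>0$, and all these quantities are mutually comparable with constants depending only on the a priori data.

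For the \emph{lower bound on $|D|$}, I would combine the right-hand inequality of Lemma~\ref{lem:energy} with the interior estimate $\sup_D E\le C\rho_0^{-2}\int_\Omega E$, which follows from interior Schauder and Caccioppoli bounds on disks of radius comparable to $\hbox{dist}(D,\partial\Omega)\ge d_0\rho_0$ together with the Korn-Poincar\'e comparison above. This gives $|W_0-W|\le C\rho_0^{-2}W_0\,|D|$, the multiplicative constant being proportional to $(\delta-1)$ in case \eqref{eq:4.jump+} and to $(1-\delta)/\delta$ in case \eqref{eq:4.jump-}, and otherwise depending on $M_0$, $M_1$, $s_0$, $\rho_0/h$, $d_0$, $\alpha_0$, $\alpha_1$, $\gamma_0$ only --- in particular \emph{not} on $h_1$ or ${\mathcal F}$. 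Rearranging yields the left-hand inequalities in \eqref{eq:4.size_estimate+} and \eqref{eq:4.size_estimate-}; the fatness condition and the unique continuation estimates are not used here.

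For the \emph{upper bound on $|D|$}, the fatness condition and Theorem~\ref{theo:LPS} enter, while $d_0$ plays no role. By a Vitali covering argument, \eqref{eq:D-fat} produces a finite family of pairwise disjoint disks $B_{\bar r}(\bar x_j)\subset D$, $j=1,\dots,N$, with $\bar r=c_0h_1\rho_0$ for a structural constant $c_0\in(0,1]$ and $N\ge c\,|D|/(h_1\rho_0)^2$; moreover every centre lies in $D_{h_1\rho_0}$, and since $D\subset\subset\Omega$ this forces $\hbox{dist}(\bar x_j,\partial\Omega)\ge h_1\rho_0$, so that Theorem~\ref{theo:LPS} applies on each $B_{\bar r}(\bar x_j)$ with one and the same constant $C_{h_1,{\mathcal F}}$, depending on $h_1$, ${\mathcal F}$ and the remaining a priori data but not on $d_0$. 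Hence
\begin{align*}
|W_0-W|&\ \ge\ K_1\int_D E\ \ge\ K_1\sum_{j=1}^N\int_{B_{\bar r}(\bar x_j)}E\\
&\ \ge\ \frac{N\,K_1}{C_{h_1,{\mathcal F}}}\int_\Omega E\ \ge\ \frac{c\,K_1}{(h_1\rho_0)^2\,C_{h_1,{\mathcal F}}}\,|D|\,W_0,
\end{align*}
with $K_1$ proportional to $\eta/\delta$ in case \eqref{eq:4.jump+} and to $\eta$ in case \eqref{eq:4.jump-}. Rearranging gives the right-hand inequalities in \eqref{eq:4.size_estimate+} and \eqref{eq:4.size_estimate-}, with $C^{\pm}_2$ depending on $M_0$, $M_1$, $s_0$, $\rho_0/h$, $\alpha_0$, $\alpha_1$, $\gamma_0$, $h_1$ and ${\mathcal F}$.

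Granted Lemma~\ref{lem:energy} and the results of Section~4, the two steps above amount to careful bookkeeping: one has to track the exact dependence of every constant on $\eta$ and $\delta$ in order to exhibit the factors $\frac{1}{\delta-1}$, $\frac{\delta}{\eta}$, $\frac{\delta}{1-\delta}$, $\frac{1}{\eta}$, and to check that the dependence on ${\mathcal F}$ and $h_1$ is confined to $C^{\pm}_2$ and that on $d_0$ to $C^{\pm}_1$. The genuine obstacle lies upstream, in Section~6: proving the three spheres inequality for the strain energy density of the Reissner-Mindlin system (Theorem~\ref{theo:three-sphere_energy}), which itself rests on the optimal-exponent three spheres inequality for $|\varphi_0|^2+|w_0|^2$ and the generalised Korn inequality of \cite{MRV16}, and then propagating it along a chain of disks at controlled distance from $\partial\Omega$ to obtain the Lipschitz propagation of smallness of Theorem~\ref{theo:LPS} under the sole assumption that $\partial\Omega$ is of Lipschitz class.
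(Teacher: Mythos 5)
Your plan is correct and follows the paper's strategy in both directions (Energy Lemma plus interior regularity and the Korn--Poincar\'e comparison for the lower bound; fatness, a covering argument, and Lipschitz propagation of smallness for the upper bound), the only superficial difference being that the paper tessellates $D_{h_1\rho_0}$ by internally non-overlapping closed squares of side $l=\frac{4\theta h_1\rho_0}{2\sqrt{2}\theta+7}$ and applies Theorem~\ref{theo:LPS} once, to the energy-minimizing square, whereas you extract a Vitali family of pairwise disjoint disks and sum the LPS lower bound over them --- an interchangeable counting. The one detail to tighten is that $c_0$ cannot be an arbitrary constant in $(0,1]$: it must be taken $\le \frac{2\theta}{7}$ so that each centre $\bar x_j$, which is only guaranteed to satisfy $\mathrm{dist}(\bar x_j,\partial\Omega)\ge h_1\rho_0$, lies in $\Omega_{\frac{7}{2\theta}\bar r}$ as required to invoke Theorem~\ref{theo:LPS} with $\rho=\bar r$.
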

\begin{rem}
Let us highlight that the upper bounds in \eqref{eq:4.size_estimate+},
\eqref{eq:4.size_estimate-} hold without assuming condition
\eqref{eq:incl-interna}, that is the inclusion is allowed to touch
the boundary of $\Omega$. This will be clear {}from the proof of Theorem
\ref{theo:D-fat} given in Section \ref{sec:proof}.
\end{rem}

\section{Unique continuation estimates} \label{sec:UC}

The key quantitative estimate of unique continuation for the
Reissner-Mindlin reference plate is the following three spheres
inequality, which was obtained in \cite[Theorem 7.1]{MRV16}.

\begin{theo}
   \label{theo:three-sphere}
Under the assumptions made in Section \ref{sec:inverseproblem},
let $(\varphi_0, w_0) \in H^1(\Omega, \R^2) \times H^1(\Omega)$ be
the solution to problem \eqref{eq:intro-1}--\eqref{eq:intro-4}
normalized by conditions \eqref{eq:normalizzazione}. Let $\bar
x\in\Omega$ and $R_1>0$ be such that $B_{R_1}(\bar x)\subset
\Omega$. Then there exists $\theta\in (0,1)$, $\theta$ depending
on $\alpha_0,\alpha_1,\gamma_0, \frac{\rho_0}{h}$ only, such that
if $0<R_3<R_2<R_1$ and $\frac{R_3}{R_1}\leq \frac{R_2}{R_1}\leq
\theta$, then we have
\begin{equation}
  \label{eq:three-sphere-1}
     \int_{B_{R_2}(\bar x)} \left|V\right|^2\leq C\left(\int_{B_{R_3}(\bar x)} \left|V\right|^2\right)^{\tau}\left(\int_{B_{R_1}(\bar x)} \left|V\right|^2\right)^{1-\tau}
\end{equation}
where
\begin{equation}
  \label{eq:three-sphere-2}
   \left|V\right|^2=|\varphi_0|^2+\frac{1}{\rho^2_0}|w_0|^2,
\end{equation}
$\tau\in(0,1)$ depends on $\alpha_0,\alpha_1,\gamma_0,
\frac{R_3}{R_1}, \frac{R_2}{R_1}, \frac{\rho_0}{h}$ only and $C$
depends on $\alpha_0,\alpha_1,\gamma_0, \frac{R_2}{R_1},
\frac{\rho_0}{h}$ only.
\end{theo}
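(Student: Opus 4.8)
\emph{Reduction and structure.} The plan is to obtain \eqref{eq:three-sphere-1} from a Carleman estimate for the Reissner--Mindlin operator. First I would translate so that $\bar x=O$ and rescale lengths by $\rho_0$ and $w_0$ by $\rho_0^{-1}$, reducing to a dimensionless second--order elliptic system in $U=(\varphi_0,w_0)$ whose coefficients depend only on $\alpha_0,\alpha_1,\gamma_0,\rho_0/h$. The decisive structural point is that \eqref{eq:intro-1}--\eqref{eq:intro-2} has a \emph{block--diagonal principal part}: rewriting \eqref{eq:intro-1} as $\divrg(S\nabla w_0)+\divrg(S\varphi_0)=0$, the only second--order term in the $w_0$--equation is the scalar Laplace--type operator $\divrg(S\nabla w_0)$, while the only second--order term in \eqref{eq:intro-2} is $\divrg(\mathbb{P}\nabla\varphi_0)$, which by \eqref{eq:bending-tensor} is a genuine isotropic Lam\'e operator, strongly elliptic by \eqref{eq:convex-S-Lame}--\eqref{eq:convex-P-Lame}. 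The two equations are coupled only through terms of order at most one ($\divrg(S\varphi_0)$ in the $w_0$--row; $S\nabla w_0$ and $S\varphi_0$ in the $\varphi_0$--rows). This is what will give the \emph{optimal} exponent: there is no need to differentiate the system into a fourth--order scalar equation for $\varphi_0$ (which would degrade $\tau$), and such a reduction would anyway be illegitimate since the coefficients are only Lipschitz.

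\emph{The Carleman estimate.} The core is a Carleman estimate for the principal operator $\mathcal{L}_0U=\big(\divrg(S\nabla w_0),\ \divrg(\mathbb{P}\nabla\varphi_0)\big)$: for a radial weight $\phi=\phi(|x|)$ of logarithmic type, singular at the origin (suitably convexified to accommodate the Lipschitz coefficients), there should be $\tau_0,C>0$ depending only on $\alpha_0,\alpha_1,\gamma_0,\rho_0/h$ such that
\[
\tau^{3}\!\int e^{2\tau\phi}|U|^2 \ +\ \tau\!\int e^{2\tau\phi}|\nabla U|^2 \ \leq\ C\!\int e^{2\tau\phi}|\mathcal{L}_0U|^2
\]
for all $U\in C^\infty_0(B_{r_0}\setminus\{O\},\R^3)$ and $\tau\geq\tau_0$. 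The scalar block $\divrg(S\nabla\cdot)$ is classical; for the Lam\'e block I would invoke, or reprove, a Carleman estimate for a second--order divergence--form elliptic system $\divrg(A\nabla\cdot)$ with Lipschitz coefficients, using in two dimensions the representation of Lam\'e fields through potentials / the Kolosov--Muskhelishvili complex structure (reducing it to a first--order elliptic system) or a direct vectorial argument. The lower--order coupling between the two blocks is then handled perturbatively: on a solution of the full problem \eqref{eq:intro-1}--\eqref{eq:intro-4} one has $|\mathcal{L}_0U|=|(\mathcal{L}_0-\mathcal{L})U|\leq C(|U|+|\nabla U|)$, and after localization this term is absorbed into the left--hand side thanks to the gain of powers of $\tau$ there.

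\emph{From Carleman to three spheres.} This is the standard cut--off argument. Choose $\zeta\in C^\infty_0$ with $\zeta\equiv1$ on $B_{R_2}\setminus B_{R_3}$ and $\mathrm{supp}\,\zeta\subset B_{\rho_1}\setminus B_{\rho_3}$, $\rho_3<R_3<R_2<\rho_1<R_1$; the smallness assumption $R_2/R_1\leq\theta$, with $\theta$ depending only on $\alpha_0,\alpha_1,\gamma_0,\rho_0/h$, leaves room for such a $\zeta$ with controlled derivatives and keeps the constants uniform. Applying the estimate to $\zeta U$ and using $\mathcal{L}U=0$ (so $\mathcal{L}_0(\zeta U)=[\mathcal{L}_0,\zeta]U+\zeta(\mathcal{L}_0-\mathcal{L})U$, a first--order expression supported in $\{\nabla\zeta\neq0\}$ together with the absorbed piece), bounding the left--hand side below over $B_{R_2}\setminus B_{R_3}$ by $e^{2\tau\phi(R_2)}$ and the contributions of the inner and outer annuli on the right by $e^{2\tau\phi(\rho_3)}$ and $e^{2\tau\phi(\rho_1)}$ respectively ($\phi$ being radial and decreasing, $\phi(r)$ its value on $|x|=r$), dividing by $e^{2\tau\phi(R_2)}$ and adding $\int_{B_{R_3}}|U|^2$ to both sides, one arrives at
\[
\int_{B_{R_2}}|U|^2 \ \leq\ C\,e^{2a\tau}\!\!\int_{B_{R_3}}\!\big(|U|^2+|\nabla U|^2\big) \ +\ C\,e^{-2b\tau}\!\!\int_{B_{R_1}}\!\big(|U|^2+|\nabla U|^2\big),
\]
with $a=\phi(\rho_3)-\phi(R_2)>0$ and $b=\phi(R_2)-\phi(\rho_1)>0$. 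Minimizing over $\tau\geq\tau_0$ yields the three--spheres inequality with exponent $\tau=b/(a+b)\in(0,1)$, depending only on $\alpha_0,\alpha_1,\gamma_0,\rho_0/h$ and on the ratios $R_3/R_1,R_2/R_1$; an interior Caccioppoli inequality for the Reissner--Mindlin system, $\int_{B_r}|\nabla U|^2\leq Cr^{-2}\int_{B_{2r}}|U|^2$, applied after a harmless shrinking of radii, removes the gradient terms and, undoing the rescaling, gives \eqref{eq:three-sphere-1}--\eqref{eq:three-sphere-2}.

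\emph{Main obstacle.} The hard part will be the Carleman estimate for the bending operator $\divrg(\mathbb{P}\nabla\cdot)$ under only the $C^{0,1}$ hypothesis on $\lambda,\mu$: its principal part is not of scalar type, so scalar Carleman theory does not apply, and the usual device of taking divergence and curl to split it into Laplacians costs a derivative the coefficients do not have, forcing the two--dimensional complex/first--order reduction or a genuinely vectorial inequality. A secondary difficulty is the bookkeeping: keeping $\tau$ at the optimal, second--order value throughout (not letting the weight or the Caccioppoli steps degrade it) and tracking the dependence of every constant on $\rho_0/h$, which enters through the ratio $\|\mathbb{P}\|/\|S\|\sim(h/\rho_0)^2$ of the two principal parts.
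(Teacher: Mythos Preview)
The paper does not actually prove this theorem: it is quoted verbatim from \cite[Theorem~7.1]{MRV16}, with no argument given here beyond the citation. So there is no ``paper's own proof'' to compare against in the present text; the comparison is necessarily with the proof in \cite{MRV16}.

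Your strategy---identify the block--diagonal principal part (scalar $\divrg(S\nabla\,\cdot\,)$ for $w_0$, isotropic Lam\'e $\divrg(\mathbb{P}\nabla\,\cdot\,)$ for $\varphi_0$), prove a Carleman estimate for each block, absorb the first--order coupling by the $\tau$--gain, then run the standard cut--off/optimization argument followed by Caccioppoli---is exactly the architecture used in \cite{MRV16}. You have also correctly located the only nontrivial step: the Carleman inequality for the two--dimensional Lam\'e operator with merely $C^{0,1}$ Lam\'e moduli, where scalar theory fails and the divergence/curl splitting costs a derivative one does not have. In \cite{MRV16} this is handled precisely by the two--dimensional reduction you allude to: one passes to a first--order elliptic system (essentially a Beltrami--type operator obtained from the isotropic Lam\'e structure) for which a Carleman estimate with the right weight is available under Lipschitz regularity; this is what produces the \emph{optimal} second--order exponent. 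Your remark that differentiating to a fourth--order scalar equation would both degrade $\tau$ and be illegitimate under $C^{0,1}$ is exactly the point.

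Two small bookkeeping items you should tighten. First, after the rescaling the relative size of the two principal blocks is governed by $(\rho_0/h)^2$ rather than $(h/\rho_0)^2$ (compare \eqref{eq:convex-S-Lame}--\eqref{eq:convex-P-Lame}); this is harmless but should be tracked so that the final constants depend on $\rho_0/h$ as stated. Second, in the cut--off step the commutator $[\mathcal{L}_0,\zeta]U$ lives on two annuli and contains $\nabla U$; before invoking Caccioppoli at the very end you already need a Caccioppoli--type bound on those annuli (or, equivalently, keep the $\tau\int e^{2\tau\phi}|\nabla U|^2$ term on the left and use it to control the commutator contribution in the intermediate region). This is routine, but as written your sketch applies Caccioppoli only once, after the optimization, which is not quite enough to close the estimate cleanly.
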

In order to obtain the size estimates we need an estimate
analogous to \eqref{eq:three-sphere-1} for the strain energy
density
\begin{equation}
  \label{eq:energy}
   E(\varphi_0,w_0) = \left(|\widehat{\nabla} \varphi_0|^2 +\frac{1}{\rho^2_0} |\varphi_0 + \nabla
  w_0|^2\right)^\frac{1}{2}.
\end{equation}
\begin{theo}
   \label{theo:three-sphere_energy}
Under the assumptions made in Section \ref{sec:inverseproblem},
let $(\varphi_0, w_0) \in H^1(\Omega, \R^2) \times H^1(\Omega)$ be
the solution to problem \eqref{eq:intro-1}--\eqref{eq:intro-4}
normalized by conditions \eqref{eq:normalizzazione}.
There exist $\theta\in (0,1)$, $\tau\in(0,1)$, $C>0$ only
depending on $\alpha_0$, $\alpha_1$, $\gamma_0$,
$\frac{\rho_0}{h}$, such that for every $\rho\in (0,\rho_0)$ and
for every $\bar x\in\Omega$ such that $dist(\bar
x,\partial\Omega)\geq \frac{7}{2\theta}\rho$, we have
\begin{equation}
  \label{eq:three-sphere-1_energy}
    \int_{B_{3\rho}(\bar x)} E^2(\varphi_0,w_0)\leq C\
\left ( \frac{\rho_0}{\rho} \right )^2 \left(\int_{B_{\rho}(\bar
x)}
E^2(\varphi_0,w_0)\right)^{\tau}\left(\int_{B_{\frac{7}{2\theta}\rho}(\bar
x)} E^2(\varphi_0,w_0)\right)^{1-\tau}.
\end{equation}
\end{theo}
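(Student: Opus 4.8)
The plan is to derive the three spheres inequality for the strain energy density $E(\varphi_0,w_0)$ from the already-established three spheres inequality \eqref{eq:three-sphere-1} for $|V|^2=|\varphi_0|^2+\rho_0^{-2}|w_0|^2$ (Theorem \ref{theo:three-sphere}), by combining it with interior regularity (Caccioppoli-type / Schauder-type estimates) for the Reissner–Mindlin system and with a generalized Korn inequality, both available from \cite{MRV16}. The point is that $E^2$ controls first derivatives of $(\varphi_0,w_0)$, so passing from an $L^2$ three spheres inequality for the solution itself to one for its energy density requires (i) bounding the energy on a ball by the $L^2$ norm of the solution on a slightly larger ball (a regularity/Caccioppoli estimate, used on the large and intermediate balls), and (ii) bounding the $L^2$ norm of the solution on a small ball by the energy on a slightly larger ball (a Poincaré/Korn-type lower bound, used on the small ball), after subtracting an appropriate rigid-type correction.

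The key steps, in order, would be as follows. First, fix $\bar x$ and $\rho$ with $\mathrm{dist}(\bar x,\partial\Omega)\geq\frac{7}{2\theta}\rho$, and set $R_1=\frac{7}{2\theta}\rho$, $R_2=3\rho$ (up to a harmless enlargement to $\frac{7}{2}\rho$ so that $R_2/R_1=\theta$), $R_3=\rho$, so that the ratio hypotheses of Theorem \ref{theo:three-sphere} are met. Second, apply an interior Caccioppoli / gradient estimate for the homogeneous system \eqref{eq:intro-1}--\eqref{eq:intro-2}: for concentric balls $B_{r}\subset B_{2r}\subset\subset\Omega$ one has $\int_{B_r}E^2(\varphi_0,w_0)\leq \frac{C}{r^2}\int_{B_{2r}}|V|^2$, where crucially the solution may be replaced on the right-hand side by $(\varphi_0-a,\,w_0-a\cdot x-b)$ for any constant vector $a$ and constant $b$, since such fields lie in the kernel of the energy functional $E$ (indeed $\widehat\nabla(a)=0$ and $(\varphi_0+\nabla w_0)$ is translation-invariant under $\varphi_0\mapsto\varphi_0-a$, $w_0\mapsto w_0+a\cdot x$); this gives the bound on $\int_{B_{3\rho}}E^2$ and on $\int_{B_{R_1}}E^2$ in terms of $\int_{B_{2R_2}}|V-\text{corr}|^2$ and $\int_{B_{R_1}}|V-\text{corr}|^2$. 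Third, invoke the generalized Korn inequality from \cite{MRV16} on the small ball to get the reverse bound $\int_{B_{\rho}}|V-\text{corr}|^2\leq C\rho_0^2\int_{B_{2\rho}}E^2(\varphi_0,w_0)$ for the optimal choice of the affine correction (the $L^2$-projection onto the $E$-kernel), picking up the factor $(\rho_0/\rho)^2$ that appears in \eqref{eq:three-sphere-1_energy}. Fourth, feed these three estimates into \eqref{eq:three-sphere-1} applied to $V-\text{corr}$ (which solves the same equations, the correction being a solution), and track how the constants and the exponent $\tau$ transform, absorbing powers of $\rho_0/\rho$ and of $\theta$ into $C$ and adjusting $\tau$ within $(0,1)$.

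The main obstacle I expect is the bookkeeping of the affine correction: the three spheres inequality \eqref{eq:three-sphere-1} must be applied to $(\varphi_0-a,w_0-a\cdot x-b)$, but the same affine correction cannot simultaneously be the optimal one on the small ball (for the Korn step) and harmless on the large ball (for the Caccioppoli step); one must either choose the correction adapted to the small ball and then estimate how it affects the large-ball $L^2$ norm (using that an affine function has its $L^2$ norm on $B_{R_1}$ controlled by its $L^2$ norm on $B_\rho$ times a power of $R_1/\rho$), or run the argument directly on $E^2$ using that $E$ annihilates the correction. Handling this cleanly — and ensuring the extra geometric factors only inflate the constant $C$ and the ratio $(\rho_0/\rho)^2$ stated in the theorem, rather than degrading the exponent to something useless — is the delicate part. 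A secondary technical point is that the interior regularity estimate must be genuinely quantitative with constants depending only on the stated parameters (in particular on $\rho_0/h$ through the relative sizes of $S$ and $\mathbb P$), which is why the Lipschitz regularity \eqref{eq:Lame-reg} of the Lamé coefficients is assumed.
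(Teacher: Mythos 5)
Your overall strategy --- three spheres for $|V^{*}|^{2}$, Caccioppoli to bound the middle-ball energy by an $L^{2}$ quantity, Poincar\'e/Korn to convert $L^{2}$ quantities back to energy --- is exactly the paper's, including the crucial observation that the correction
$(\varphi^{*},w^{*})=(\varphi_0-c,\,w_0+c\cdot(x-\bar x)-d)$
leaves $\widehat\nabla\varphi_0$ and $\varphi_0+\nabla w_0$ unchanged while still solving the system. However, your step on the large ball $B_{R_1}=B_{\frac{7}{2\theta}\rho}$ points the inequality the wrong way: you describe applying Caccioppoli to get $\int_{B_{R_1}}E^{2}$ \emph{above} $\int_{B_{R_1}}|V^{*}|^{2}$, but on the right-hand side of \eqref{eq:three-sphere-1_energy} you need the opposite bound $\int_{B_{R_1}}|V^{*}|^{2}\leq C\rho_0^{2}\int_{B_{R_1}}E^{2}$, i.e.\ Poincar\'e/Korn again, not Caccioppoli. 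As you yourself flag, a single affine correction cannot be chosen to have zero mean on both $B_\rho$ and $B_{R_1}$; your proposed fix (estimate the affine term's $L^{2}$ growth through the fixed scale ratio $R_1/\rho=\tfrac{7}{2\theta}$) would work, but the paper avoids this bookkeeping altogether by choosing $c,d$ as averages over $B_\rho$ and then using the subdomain Poincar\'e inequality \eqref{eq:poinc2} with $G=B_\rho$ and $\Omega=B_{R_1}$: it directly controls $\|u-u_{G}\|_{H^{1}(B_{R_1})}$ by $\rho_0\|\nabla u\|_{L^{2}(B_{R_1})}$, so the mismatched-scale issue never arises. Also, your Korn step on the small ball can be done on $B_\rho$ itself (no enlargement to $B_{2\rho}$ needed), since the correction is the mean over $B_\rho$. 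With the large-ball step corrected to Poincar\'e/Korn via \eqref{eq:poinc2}, the argument closes exactly as in the paper, with $\tau$ unchanged and the factor $(\rho_0/\rho)^{2}$ coming from the Caccioppoli constant $C/\rho^{2}$ multiplied by the $\rho_0^{2}$ from Poincar\'e/Korn.
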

The main tool used to derive inequality
\eqref{eq:three-sphere-1_energy} {}from inequality
\eqref{eq:three-sphere-1} is the following Korn's inequality of
constructive type, which was established in \cite[Theorem
4.3]{MRV16}.
\begin{theo} [Generalized second Korn inequality]
   \label{theo:Korn_gener2}
Let $\Omega$ be a bounded domain in $\R^2$, with boundary
of Lipschitz class with constants $\rho_0$, $M_0$, satisfying
\eqref{eq:Omegabound}, \eqref{eq:ballinsideOmega}.
There exists a positive constant $C$ only depending on $M_0$, $M_1$ and $s_0$, such that,
for every $\varphi\in H^1(\Omega,\R^2)$ and for every $w\in H^1(\Omega,\R)$,
\begin{equation}
  \label{eq:korn}
    \|\nabla \varphi\|_{L^2(\Omega)}\leq C\left(\|\widehat{\nabla} \varphi\|_{L^2(\Omega)}+\frac{1}{\rho_0}\|\varphi+\nabla w\|_{L^2(\Omega)}\right).
\end{equation}
\end{theo}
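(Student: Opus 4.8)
The plan is to reduce \eqref{eq:korn} to an $H^{-1}$--to--$L^2$ estimate for the single scalar $\omega:=\partial_1\varphi_2-\partial_2\varphi_1$, which is then supplied by Ne\v{c}as' inequality on $\Omega$. Two elementary facts drive the argument. First, pointwise in $\R^2$ one has $|\nabla\varphi|^2=|\widehat{\nabla}\varphi|^2+\tfrac12\,\omega^2$, so it suffices to bound $\|\omega\|_{L^2(\Omega)}$ by the right--hand side of \eqref{eq:korn}. Second, $\operatorname{curl}\nabla w=0$, hence $\omega=\partial_1(\varphi+\nabla w)_2-\partial_2(\varphi+\nabla w)_1$; this is exactly where the statement uses its hypothesis, because it means that the added gradient $\nabla w$ leaves the ``rotation'' $\omega$ untouched, so that $\omega$ is one derivative worth of the $L^2$ field $\varphi+\nabla w$.

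From this second identity, together with the (normalized) bound $\|\partial_i g\|_{H^{-1}(\Omega)}\leq \tfrac{C}{\rho_0}\|g\|_{L^2(\Omega)}$, I would obtain $\|\omega\|_{H^{-1}(\Omega)}\leq \tfrac{C}{\rho_0}\|\varphi+\nabla w\|_{L^2(\Omega)}$. For $\nabla\omega$ I would use the Saint--Venant--type identity $\partial_k\partial_i\varphi_j=\partial_i(\widehat{\nabla}\varphi)_{kj}+\partial_k(\widehat{\nabla}\varphi)_{ij}-\partial_j(\widehat{\nabla}\varphi)_{ik}$, which gives $\partial_k\omega=2\partial_1(\widehat{\nabla}\varphi)_{2k}-2\partial_2(\widehat{\nabla}\varphi)_{1k}$, i.e. $\nabla\omega$ is a fixed linear combination of first derivatives of the components of $\widehat{\nabla}\varphi$; hence $\|\nabla\omega\|_{H^{-1}(\Omega)}\leq \tfrac{C}{\rho_0}\|\widehat{\nabla}\varphi\|_{L^2(\Omega)}$. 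Invoking now Ne\v{c}as' inequality on the bounded Lipschitz domain $\Omega$ --- there is $C$, depending only on $M_0$, $M_1$, $s_0$, such that $\|g\|_{L^2(\Omega)}\leq C\big(\|g\|_{H^{-1}(\Omega)}+\rho_0\|\nabla g\|_{H^{-1}(\Omega)}\big)$ for every $g\in L^2(\Omega)$ --- and applying it to $g=\omega$, I would get
\[
 \|\omega\|_{L^2(\Omega)}\leq C\Big(\tfrac{1}{\rho_0}\|\varphi+\nabla w\|_{L^2(\Omega)}+\|\widehat{\nabla}\varphi\|_{L^2(\Omega)}\Big),
\]
and therefore $\|\nabla\varphi\|_{L^2(\Omega)}\leq \|\widehat{\nabla}\varphi\|_{L^2(\Omega)}+\|\omega\|_{L^2(\Omega)}\leq C\big(\|\widehat{\nabla}\varphi\|_{L^2(\Omega)}+\tfrac{1}{\rho_0}\|\varphi+\nabla w\|_{L^2(\Omega)}\big)$, which is \eqref{eq:korn}.

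The one delicate point is Ne\v{c}as' inequality in \emph{constructive} form on a general Lipschitz (as opposed to smooth) domain, i.e. with a constant depending only on the Lipschitz character $M_0$ and the size parameters $M_1$, $s_0$; everything else above is a short manipulation. I would establish it along one of the two standard routes, keeping track of the constants: either by writing $\Omega$ as a finite union of subdomains each star--shaped with respect to a ball --- the number of pieces and their shapes being controlled by $M_0$, $M_1$, $s_0$ --- on each of which Bogovski\u{\i}'s explicit right inverse of the divergence gives the estimate, and then patching the pieces; or by localizing with a quantitatively controlled partition of unity subordinate to a finite covering of $\partial\Omega$ by boxes in which $\partial\Omega$ is a Lipschitz graph, flattening each box, reducing to the elementary half--space/cube case, and summing.
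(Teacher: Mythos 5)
The paper does not actually prove Theorem~\ref{theo:Korn_gener2}; it invokes it as a quoted result from \cite[Theorem 4.3]{MRV16}, so there is no in-paper proof to compare against. Judged on its own, your argument is correct and follows the classical Ne\v{c}as/Lions route to Korn inequalities, and every step checks out: the pointwise identity $|\nabla\varphi|^2=|\widehat{\nabla}\varphi|^2+\tfrac12\omega^2$ reduces the problem to $\|\omega\|_{L^2}$; the observation that $\operatorname{curl}\nabla w=0$ makes $\omega$ a curl of the $L^2$ field $\varphi+\nabla w$, giving $\|\omega\|_{H^{-1}}\le \tfrac{C}{\rho_0}\|\varphi+\nabla w\|_{L^2}$ and being exactly the mechanism by which the $\nabla w$-term in the statement enters; and the Saint--Venant identity $\partial_k\omega=2\partial_1(\widehat{\nabla}\varphi)_{2k}-2\partial_2(\widehat{\nabla}\varphi)_{1k}$ is verified by direct computation (the symmetry of $\widehat{\nabla}\varphi$ makes the $\partial_k(\widehat{\nabla}\varphi)_{12}$-terms cancel), so $\|\nabla\omega\|_{H^{-1}}\le\tfrac{C}{\rho_0}\|\widehat{\nabla}\varphi\|_{L^2}$. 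The scalings are also consistent with the paper's stated convention of dimensionally homogeneous norms. The single load-bearing ingredient is, as you correctly identify, the constructive Ne\v{c}as inequality $\|g\|_{L^2(\Omega)}\le C\big(\|g\|_{H^{-1}(\Omega)}+\rho_0\|\nabla g\|_{H^{-1}(\Omega)}\big)$ with a constant depending only on $M_0$, $M_1$, $s_0$; both routes you sketch for it (Bogovski\u{\i} on a controlled star-shaped decomposition, or localization and flattening with a quantitative partition of unity) are standard and do yield that dependence, but either would need to be carried out in detail for the theorem's dependence claim to be fully established. Since the referenced proof in \cite{MRV16} is not reproduced here, I cannot say whether the authors' original argument coincides with yours, but your route is a clean and self-contained way to obtain the stated inequality.
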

It is also convenient to recall the following Poincar\'e
inequalities.
\begin{prop} [Poincar\'e inequalities]
   \label{prop:Poinc}
Let $\Omega$ be a bounded domain in $\R^2$, with boundary
of Lipschitz class with constants $\rho_0$, $M_0$, satisfying
\eqref{eq:Omegabound}. There exists a positive constant $C_P$ only depending on $M_0$ and $M_1$, such that
for every $u\in H^1(\Omega,\R^n)$, $n=1,2$,
\begin{equation}
  \label{eq:poinc1}
    \|u-u_\Omega\|_{L^2(\Omega)}\leq C_P\rho_0\|\nabla u\|_{L^2(\Omega)},
\end{equation}
\begin{equation}
  \label{eq:poinc2}
    \|u-u_G\|_{H^1(\Omega)}\leq
        \left(1+\left(\frac{|\Omega|}{|G|}\right)^\frac{1}{2}\right)\sqrt{1+C_P^2}\ \rho_0\|\nabla
        u\|_{L^2(\Omega)},
\end{equation}
where $G$, $G \subseteq \Omega$, is any measurable subset of
$\Omega$ with positive measure and $u_G= \frac{1}{|G|}\int_G u$.
\end{prop}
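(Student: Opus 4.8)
The plan is to regard \eqref{eq:poinc1} as the substantive statement and obtain \eqref{eq:poinc2} from it by an elementary averaging argument, keeping track of the constants so that they emerge in the stated form. Inequality \eqref{eq:poinc1} is the Poincar\'e--Wirtinger inequality; the only delicate point is that $C_P$ must be controlled by $M_0$ and $M_1$ alone, for which I would use a covering-and-chaining scheme. The first ingredient is the local estimate on a single Lipschitz graph cylinder: if $P\in\partial\Omega$ and, in the coordinates of Definition \ref{def:2.1}, $\omega_P=\Omega\cap R_{\rho_0,M_0\rho_0}(P)$ is the subgraph $\{x_2>\psi(x_1)\}$ with $\|\psi\|_{C^{0,1}}\le M_0\rho_0$, then $\|u-u_{\omega_P}\|_{L^2(\omega_P)}\le c(M_0)\rho_0\|\nabla u\|_{L^2(\omega_P)}$, which is the standard Poincar\'e inequality for a Lipschitz graph domain of controlled shape (proved by reduction to the one-dimensional fundamental theorem of calculus, followed by Cauchy--Schwarz and Fubini); the analogous bound on interior disks compactly contained in $\Omega$ is even simpler. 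The second ingredient: since $\mathrm{diam}(\Omega)\le M_1\rho_0$, one can cover $\overline\Omega$ by $N=N(M_0,M_1)$ such pieces with uniformly bounded overlap, arranged — using that $\Omega$ is connected — so that the graph on these pieces, with an edge whenever two of them meet in a set of measure at least $c(M_0)\rho_0^2$, is connected. The third ingredient is the classical ``finite connected union'' lemma (equivalently, the fact that a bounded Lipschitz domain is a John domain with John constants estimable in terms of $M_0,M_1$, combined with the Boman chain condition), which propagates the local Poincar\'e inequalities to \eqref{eq:poinc1} with $C_P$ depending only on $N$, on the local constants and on the overlap sizes — hence only on $M_0$ and $M_1$.

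Granting \eqref{eq:poinc1}, inequality \eqref{eq:poinc2} follows immediately. Write $u-u_G=(u-u_\Omega)+(u_\Omega-u_G)$ and note $u_\Omega-u_G=-\frac{1}{|G|}\int_G(u-u_\Omega)$; by Cauchy--Schwarz and a direct computation with the mean values,
\begin{equation*}
\|u-u_G\|_{L^2(\Omega)}\le\Bigl(1+\bigl(|\Omega|/|G|\bigr)^{1/2}\Bigr)\|u-u_\Omega\|_{L^2(\Omega)}\le\Bigl(1+\bigl(|\Omega|/|G|\bigr)^{1/2}\Bigr)C_P\rho_0\|\nabla u\|_{L^2(\Omega)}.
\end{equation*}
Since $\nabla(u-u_G)=\nabla u$, squaring, adding $\rho_0^2\|\nabla u\|_{L^2(\Omega)}^2$, and using $1\le\bigl(1+(|\Omega|/|G|)^{1/2}\bigr)^2$ to factor out $\bigl(1+(|\Omega|/|G|)^{1/2}\bigr)^2(1+C_P^2)$ gives precisely \eqref{eq:poinc2}; the vector-valued case $n=1,2$ is identical, with $|\cdot|$ the Euclidean norm.

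The main obstacle lies entirely in \eqref{eq:poinc1}: constructing the cover with cardinality, overlap measures and local Poincar\'e constants controlled only by $M_0$ and $M_1$, and running the chaining so that this dependence is not lost along the way. The one-dimensional reduction on each Lipschitz cylinder and the passage from \eqref{eq:poinc1} to \eqref{eq:poinc2} are routine. (Alternatively, \eqref{eq:poinc1} with the claimed constructive dependence may simply be quoted from the literature on Poincar\'e inequalities on Lipschitz domains.)
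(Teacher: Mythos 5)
Your proposal is correct. Note first that the paper does not actually prove this Proposition: it simply states it and cites \cite[Example 3.5]{A-M-R08} and \cite{A-M-R02} for the quantitative evaluation of $C_P$ — that is, it takes the route you mention parenthetically at the end. The substantive content of your argument is therefore additional to what the paper supplies. Your derivation of \eqref{eq:poinc2} from \eqref{eq:poinc1} is exactly the standard averaging computation and matches the stated constant precisely (writing $u_\Omega-u_G=-\tfrac{1}{|G|}\int_G(u-u_\Omega)$, applying Cauchy--Schwarz and $G\subseteq\Omega$ to get $\|u-u_G\|_{L^2(\Omega)}\le(1+(|\Omega|/|G|)^{1/2})\|u-u_\Omega\|_{L^2(\Omega)}$, then squaring and using $1\le(1+(|\Omega|/|G|)^{1/2})^2$ to produce the $\sqrt{1+C_P^2}$ factor, with the $H^1$ norm understood in the paper's normalization of Remark \ref{rem:2.1}). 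Your covering-and-chaining sketch for \eqref{eq:poinc1} is the standard constructive route and is consistent with how the cited references obtain $C_P$ depending only on $M_0,M_1$; the two points one would want to be explicit about in a full write-up are (i) that the chain/Boman covering argument uses connectedness of $\Omega$ (which is guaranteed since $\Omega$ is a domain), and (ii) that the number of boundary cylinders and their overlap measures are controlled because $\mathrm{diam}(\Omega)\le M_1\rho_0$ and the cylinders all have comparable size $\sim\rho_0$ with Lipschitz constant $\le M_0$. These are exactly the inputs the cited literature uses, so there is no gap.
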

We refer to \cite[Example 3.5]{A-M-R08} and also \cite{A-M-R02}
for a quantitative evaluation of the constant $C_P$.
\begin{proof}[Proof of Theorem \ref{theo:three-sphere_energy}]
Let us apply Theorem \ref{theo:three-sphere} to the solution
$(\varphi^*,w^*)$ to \eqref{eq:intro-1}--\eqref{eq:intro-4}, where
\begin{equation}
  \label{eq:3sfere1}
   \varphi^* = \varphi_0-c_\rho, \quad w^* = w_0+c_\rho\cdot(x-\bar x)-d_\rho,
\end{equation}
with
\begin{equation}
  \label{eq:3sfere2}
   c_\rho = \frac{1}{|B_\rho|}\int_{B_\rho(\bar x)}\varphi_0, \quad
    d_\rho = \frac{1}{|B_\rho|}\int_{B_\rho(\bar x)}w_0,
\end{equation}
\begin{equation}
  \label{eq:3sfere3}
   R_3 =\rho, \quad R_2 =\frac{7}{2}\rho, \quad R_1 =\frac{7}{2\theta}\rho.
\end{equation}
Since $\varphi^* + \nabla w^* = \varphi_0 + \nabla w_0$ and $\nabla
\varphi^* = \nabla \varphi_0$, we have
\begin{equation}
  \label{eq:3sfere4}
     \int_{B_{\frac{7}{2}\rho}(\bar x)} |\varphi^*|^2+\frac{1}{\rho_0^2}|w^*|^2
        \leq C\left(\int_{B_{\rho}(\bar x)} |\varphi^*|^2+\frac{1}{\rho_0^2}|w^*|^2\right)^{\tau}\left(\int_{B_{\frac{7}{2\theta}\rho}(\bar x)} |\varphi^*|^2+\frac{1}{\rho_0^2}|w^*|^2\right)^{1-\tau},
\end{equation}
where $\tau\in (0,1)$, $C>0$ only depend on $\alpha_0$, $\alpha_1$, $\gamma_0$ and
$\frac{\rho_0}{h}$.

By applying Poincar\'e inequality \eqref{eq:poinc1} to the
functions $w^*$ and $\varphi^*$ and Korn inequality
\eqref{eq:korn} to $\varphi^*$ in the domain $B_{\rho}(\bar x)$
where these functions have zero mean value, we have
\begin{multline}
  \label{eq:3sfere5}
     \int_{B_\rho(\bar x)} |\varphi^*|^2+\frac{1}{\rho_0^2}|w^*|^2
        \leq C \int_{B_\rho(\bar x)} |\varphi^*|^2+\frac{\rho^2}{\rho_0^2}|\nabla w_0+c_\rho|^2
        \leq \\
        \leq C \int_{B_\rho(\bar x)} |\varphi^*|^2+\frac{\rho^2}{\rho_0^2}|\nabla w_0+\varphi_0|^2  +\frac{\rho^2}{\rho_0^2}|\varphi_0-c_\rho|^2\leq  \\
        \leq C\int_{B_\rho(\bar x)} \left(1+\frac{\rho^2}{\rho_0^2}\right)|\varphi^*|^2+\frac{\rho^2}{\rho_0^2}|\nabla w_0+\varphi_0|^2 \leq \\
        \leq C \int_{B_\rho(\bar x)} \rho^2\left(1+\frac{\rho^2}{\rho_0^2}\right)|\nabla\varphi^*|^2+\frac{\rho^2}{\rho_0^2}|\nabla w_0+\varphi_0|^2\leq \\
        \leq C \int_{B_\rho(\bar x)} \rho^2\left(1+\frac{\rho^2}{\rho_0^2}\right)|\widehat{\nabla}\varphi_0|^2+\left(1+\frac{\rho^2}{\rho_0^2}\right)|\nabla w_0+\varphi_0|^2\leq \\
        \leq C \rho_0^2\left(\int_{B_\rho(\bar x)}|\widehat{\nabla}\varphi_0|^2+\frac{1}{\rho_0^2}|\nabla w_0+\varphi_0|^2\right),
\end{multline}
with $C$ an absolute constant.

Similarly, we can estimate  the integral over
$B_{\frac{7}{2\theta}\rho}(\bar x)$ by using Poincar\'e inequality
\eqref{eq:poinc2} with $G=B_{\rho}(\bar x)$, $\Omega =
B_{\frac{7}{2\theta}\rho}(\bar x)$, obtaining
\begin{multline}
  \label{eq:3sfere6}
     \int_{B_{\frac{7}{2\theta}\rho}(\bar x)} |\varphi^*|^2+\frac{1}{\rho_0^2}|w^*|^2
        \leq C \rho_0^2\left(\int_{B_{\frac{7}{2\theta}\rho}(\bar x)}|\widehat{\nabla}\varphi_0|^2+\frac{1}{\rho_0^2}|\nabla w_0+\varphi_0|^2\right),
\end{multline}
with $C>0$ only depending on $\alpha_0$, $\alpha_1$, $\gamma_0$, $\frac{\rho_0}{h}$.

Next we need a Caccioppoli type inequality. To this aim, let us
consider a function $\eta\in C^\infty_0(\R^2)$, having compact
support contained in $B_{\frac{7}{2}\rho}(\bar x)$, satisfying
$\eta\equiv 1$ in $B_{3\rho}(\bar x)$, $\eta\geq 0$, $|\nabla
\eta|\leq \frac{C}{\rho}$, $C>0$ being an absolute constant.
Inserting in the weak formulation \eqref{eq:weak_form} the test
functions $\psi =\eta^2\varphi^*$, $v=\eta^2w^*$, we have
\begin{multline}
  \label{eq:3sfere7}
     \int_{B_{\frac{7}{2}\rho}(\bar x)} \eta^2{\mathbb P}\widehat{\nabla}\varphi^*
        \cdot \widehat{\nabla}\varphi^*+S(\varphi^*+\nabla w^*)\cdot (\eta^2(\varphi^*+\nabla w^*))\leq\\
        \leq C
        \int_{B_{\frac{7}{2}\rho}(\bar x)}\left(h^\frac{3}{2}|\nabla \eta||\varphi^*|\right)\left(h^\frac{3}{2}\eta|\widehat{\nabla} \varphi^*|\right)+
        \left(h^\frac{1}{2}\eta|\varphi^*+\nabla w^*|\right)\left(h^\frac{1}{2}|\nabla \eta||w^*|\right),
\end{multline}
where $C$ is an absolute constant.

By applying the ellipticity assumptions \eqref{eq:convex-S-Lame},
\eqref{eq:convex-P-Lame} and by using the standard inequality
$2ab\leq \epsilon a^2 + \frac{b^2}{\epsilon}$, $\epsilon >0$, we
have
\begin{multline}
  \label{eq:3sfere8}
     \int_{B_{\frac{7}{2}\rho}(\bar x)} \eta^2h^3|\widehat{\nabla}\varphi^*|^2
        +h\eta^2|\varphi^*+\nabla w^*|^2\leq\\
        \leq C\epsilon \int_{B_{\frac{7}{2}\rho}(\bar x)} \eta^2h^3|\widehat{\nabla}\varphi^*|^2
        +h\eta^2|\varphi^*+\nabla w^*|^2
        +\frac{C}{\epsilon\rho^2}\int_{B_{\frac{7}{2}\rho}(\bar x)}
        h^3|\varphi^*|^2 +h|w^*|^2,
\end{multline}
with $C$ only depending on $\alpha_0$, $\alpha_1$, $\gamma_0$. For
a suitable value of $\epsilon$, only depending on $\alpha_0$,
$\alpha_1$, $\gamma_0$, we have
\begin{multline}
  \label{eq:3sfere9}
     \int_{B_{3\rho}(\bar x)} |\widehat{\nabla}\varphi_0|^2
        +\frac{1}{\rho_0^2}|\varphi_0+\nabla w_0|^2
        \leq\frac{C}{\rho^2}\int_{B_{\frac{7}{2}\rho}(\bar x)}
        |\varphi^*|^2 +\frac{1}{\rho_0^2}|w^*|^2,
\end{multline}
with $C$ only depending on $\alpha_0$, $\alpha_1$, $\gamma_0$ and
$\frac{\rho_0}{h}$. By \eqref{eq:3sfere4}, \eqref{eq:3sfere5},
\eqref{eq:3sfere6}, \eqref{eq:3sfere9}, the thesis follows.
\end{proof}

Finally, the last mathematical tool of quantitative unique
continuation is the following result, whose proof is deferred in
Section \ref{sec:proofLPS}.

\begin{theo} [Lipschitz propagation of smallness]
   \label{theo:LPS}
Under the assumptions made in Section \ref{sec:inverseproblem},
for every $\rho>0$ and for every
$x\in\Omega_{\frac{7}{2\theta}\rho}$, we have
\begin{equation}
  \label{eq:LPS}
     \int_{B_{\rho}(x)} E^2(\varphi_0, w_0)\geq C_\rho\int_{\Omega} E^2(\varphi_0, w_0),
\end{equation}
where $C_\rho$ only depends on $\alpha_0$, $\alpha_1$, $\gamma_0$,
$\frac{\rho_0}{h}$, $M_0$, $M_1$, $s_0$, ${\mathcal F}$ and
$\rho$, and $\theta\in (0,1)$ has been introduced in Theorem
\ref{theo:three-sphere}, $\theta$ depending on
$\alpha_0,\alpha_1,\gamma_0, \frac{\rho_0}{h}$ only.
\end{theo}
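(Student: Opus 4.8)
The plan is to derive the Lipschitz propagation of smallness (LPS) estimate \eqref{eq:LPS} from the three spheres inequality for the strain energy density, Theorem \ref{theo:three-sphere_energy}, by the now-classical ``iterated three spheres along a chain'' argument (see \cite{MRV07-IUMJ}, \cite{A-M-R08}). Fix $\rho>0$ and $x\in\Omega_{\frac{7}{2\theta}\rho}$. First I would reduce the claim to a doubling-type statement: it suffices to show that for any two points $y,z$ in a suitable interior subdomain $\Omega_{c\rho}$ that can be joined by a path staying at controlled distance from $\partial\Omega$, one has $\int_{B_\rho(z)}E^2 \geq C \int_{B_\rho(y)}E^2$ with $C$ depending only on the a priori data and on the length of the path (equivalently on $\rho$ and the geometric parameters). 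Iterating \eqref{eq:three-sphere-1_energy} along such a chain of overlapping balls of radius comparable to $\rho$, with the exponent $\tau$ fixed by the fixed ratios in \eqref{eq:3sfere3}, propagates a lower bound from $B_\rho(x)$ to $B_\rho(y)$ for every $y\in\Omega_{c\rho}$; the number of balls in the chain is bounded in terms of $\mathrm{diam}(\Omega)/\rho$ and $M_0$, $M_1$, $s_0$, so the accumulated constant is explicit.

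Next I would upgrade this interior lower bound to a lower bound for the full energy $\int_\Omega E^2(\varphi_0,w_0)$. Here the boundary regularity hypothesis \eqref{eq:OmegarhoLip} is used: since $\Omega_r$ is of Lipschitz class with the same constants for $r<h_0\rho_0$, one can cover $\Omega$ by $\Omega_{c\rho}$ together with a boundary layer of controlled width, and on the boundary layer one controls $\int E^2$ by interior quantities via the Lipschitz character of $\Omega$ (a reflection/extension or a Hardy-type argument across the Lipschitz graph, exactly as in the Kirchhoff–Love treatment). Combining, $\int_\Omega E^2 \leq C_\rho^{-1}\int_{B_\rho(x)}E^2$ for any such $x$.

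The remaining — and genuinely delicate — point is that the chaining argument only yields a lower bound \emph{relative} to some reference ball, i.e. it shows $\int_{B_\rho(x)}E^2 \geq C_\rho \int_{B_\rho(\bar x)}E^2$ for a \emph{fixed} $\bar x$ (say centered at $x_0$ from \eqref{eq:ballinsideOmega}); to reach $\int_{B_\rho(\bar x)}E^2 \geq c\,\int_\Omega E^2$ one needs a quantitative \emph{non-degeneracy}/lower bound for the energy on a single fixed ball in terms of the total energy, and this is where the frequency-type parameter $\mathcal F$ in \eqref{eq:frequency} enters. The standard route, which I would follow, is: if $\int_{B_\rho(\bar x)}E^2$ were very small compared to $\int_\Omega E^2$, then by the propagation just established $\int_\Omega E^2$ would itself be small compared to $\int_\Omega E^2$ — a contradiction unless the hidden constant is bounded below, and making that precise requires comparing the $H^{-1/2}$ and $H^{-1}$ norms of the data (hence $\mathcal F$) to bound $\|\varphi_0\|_{H^1}+\rho_0^{-1}\|w_0\|_{H^1}$ from below by the total energy, using the stability estimate \eqref{eq:dir7}, the Korn inequality \eqref{eq:korn}, and the Poincaré inequalities \eqref{eq:poinc1}–\eqref{eq:poinc2} to pass between $\|\nabla\varphi_0\|_{L^2}$, $E(\varphi_0,w_0)$ and the full $H^1$ norms (the normalization \eqref{eq:normalizzazione} is used here). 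I expect this last calibration of the absolute lower bound on a fixed ball — i.e. ruling out that the solution's energy concentrates away from every fixed interior ball, quantified through $\mathcal F$ — to be the main obstacle; the chaining and the boundary-layer absorption are routine given Theorem \ref{theo:three-sphere_energy} and \eqref{eq:OmegarhoLip}.
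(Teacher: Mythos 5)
Your high-level picture — iterate the three spheres inequality for $E^2$ along chains of balls, tessellate an interior subdomain, and invoke the frequency $\mathcal F$ to rule out degeneracy — is broadly on target, but your second paragraph contains a genuine gap that is, in fact, the technical heart of this theorem, and your third paragraph mislocates where $\mathcal F$ is actually used.

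The gap: you claim the boundary layer $\Omega\setminus\Omega_{c\rho}$ can be "absorbed" by "a reflection/extension or a Hardy-type argument across the Lipschitz graph, exactly as in the Kirchhoff--Love treatment," and that this gives $\int_\Omega E^2 \leq C_\rho^{-1}\int_{B_\rho(x)}E^2$ outright. That is not available here. In the earlier Kirchhoff--Love papers the boundary is at least $C^{1,1}$ or smoother, and one has global regularity of $(\varphi_0,w_0)$ up to $\partial\Omega$, which is what makes a thin boundary collar negligible by an elementary argument. Here $\partial\Omega$ is only Lipschitz — this is the very generalization the paper is after — and there is no reflection of a Reissner--Mindlin solution across a Lipschitz graph, nor a Hardy-type estimate for the pointwise energy density that would bound $\int_{\Omega\setminus\Omega_{c\rho}}E^2$ by interior quantities without more information. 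The paper instead never tries to estimate $E^2$ in the boundary layer at all: the chaining is run against the reference quantity $\int_\Omega|\varphi_0^*|^2+\rho_0^{-2}|w_0^*|^2$ (with the affine corrections $\varphi_0^*=\varphi_0-c$, $w_0^*=w_0+c\cdot(x-\bar x)-d$, so that Caccioppoli, Korn, and Poincar\'e apply in the tessellated interior), and what has to be shown is that the \emph{$L^2$-mass} of $(\varphi_0,w_0)$ carried by the collar $\Omega\setminus\Omega_{\frac{5}{\theta}\rho}$ is a small fraction of the total. That is exactly where $\mathcal F$ enters: via H\"older and Sobolev on the collar, the trace inequality, Korn/Poincar\'e, and crucially a new duality result (Proposition \ref{prop:dopo_dualità}, proved through Lemma \ref{lem:prima_dualità} by a Rellich-type argument near the Lipschitz boundary for the Reissner--Mindlin system) that bounds $\|\overline M\|_{H^{-1}}+\rho_0\|\overline Q\|_{H^{-1}}$ by $\|\varphi_0\|_{L^2(\partial\Omega)}+\rho_0^{-1}\|w_0\|_{L^2(\partial\Omega)}$. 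Combining this with \eqref{eq:dir7} produces a Poincar\'e-quotient bound $\int_\Omega(|\nabla\varphi_0|^2+\rho_0^{-2}|\nabla w_0|^2)\,/\,\int_\Omega(|\varphi_0|^2+\rho_0^{-2}|w_0|^2)\lesssim\mathcal F^4/\rho_0^2$, which is what makes the collar contribution small for $\rho$ below a threshold depending on $\mathcal F$.

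So your two "separate" delicate points in paragraphs two and three are really the same step, and your plan for it (reflection/extension) is the one that fails for Lipschitz $\partial\Omega$. You should instead replace the collar absorption by: (i) run the chaining and tessellation against $\int_\Omega|\varphi_0^*|^2+\rho_0^{-2}|w_0^*|^2$, not against $\int_\Omega E^2$; (ii) carefully track the affine normalizations $\varphi_0^*,w_0^*$ (a nontrivial bookkeeping block, equations \eqref{LPS-p4-e2}--\eqref{LPS-p6-e3} in the paper, which you omit); (iii) prove the collar smallness using the duality Proposition \ref{prop:dopo_dualità} and trace estimates, which is where $\mathcal F$ quantitatively appears; and (iv) only at the very end convert back to $\int_\Omega E^2$ via Poincar\'e. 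The chaining and tessellation you describe are indeed routine given Theorem \ref{theo:three-sphere_energy}; the duality lemma and the collar estimate are not.
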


\section{Proof of Theorem \ref{theo:D-fat}} \label{sec:proof}

The basic result connecting the presence of an inclusion to the
difference of the works corresponding to problems
\eqref{eq:intro-1}--\eqref{eq:intro-4} and
\eqref{eq:pbm-with-1}--\eqref{eq:pbm-with-4} is the following
Lemma.
\begin{lem} [Energy Lemma]
  \label{lem:energy}
  Let $\Omega$ be a bounded domain in $\R^2$ with boundary of Lipschitz class. Let
    $S, \widetilde{S} \in L^\infty(\Omega,   {\M}^{2})$ satisfy \eqref{eq:anto-11.1} and
  $\mathbb P, \widetilde{{\mathbb P}}\in L^\infty(\Omega,   {\cal L} ({\M}^{2}, {\M}^{2}))$
  satisfy \eqref{eq:anto-11.2}. Let us assume that the jumps
$(\widetilde{S} - S)$ and $(\widetilde{{\mathbb P}} - {\mathbb
P})$ satisfy either \eqref{eq:4.jump+bis}--\eqref{eq:4.jump+} or
\eqref{eq:4.jump-bis}--\eqref{eq:4.jump-}. Let $(\varphi_0,w_0)$,
$(\varphi,w)\in H^{1}(\Omega, {\Bbb R}^{2})\times H^{1}(\Omega)$
be the weak solutions to problems
\eqref{eq:intro-1}--\eqref{eq:intro-4},
\eqref{eq:pbm-with-1}--\eqref{eq:pbm-with-4}, respectively.

If \eqref{eq:4.jump+bis}--\eqref{eq:4.jump+} hold, then we have
\begin{multline}
  \label{eq:energy1}
  \frac {\eta} {\delta} \int_{D}\frac{h^3}{12}{\xi}_{0}|\widehat {\nabla} \varphi_{0}|^{2}
    +h{\sigma}_0|\varphi_0+\nabla w_0|^2
  \leq
  \int_{\partial \Omega} \overline{Q}(w_0-w) + \overline{M}\cdot
  (\varphi_0-\varphi)\leq \\
  \leq
  (\delta-1)\int_{D}\frac{h^3}{12}{\xi}_{1}|\widehat {\nabla} \varphi_{0}|^{2}
    +h{\sigma}_1|\varphi_0+\nabla w_0|^2
 .
\end{multline}
If \eqref{eq:4.jump-bis}--\eqref{eq:4.jump-} hold, then we have
\begin{multline}
  \label{eq:energy2}
  \eta \int_{D}\frac{h^3}{12}{\xi}_{0}|\widehat {\nabla} \varphi_{0}|^{2}
    +h{\sigma}_0|\varphi_0+\nabla w_0|^2
  \leq
  \int_{\partial \Omega} \overline{Q}(w_0-w) + \overline{M}\cdot
  (\varphi_0-\varphi)\leq \\
  \leq
  \frac{1-\delta}{\delta}\int_{D}\frac{h^3}{12}{\xi}_{1}|\widehat {\nabla} \varphi_{0}|^{2}
    +h{\sigma}_1|\varphi_0+\nabla w_0|^2
  .
\end{multline}
\end{lem}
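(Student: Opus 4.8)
The plan is to express both works through the variational formulations and to exploit that problems \eqref{eq:intro-1}--\eqref{eq:intro-4} and \eqref{eq:pbm-with-1}--\eqref{eq:pbm-with-4} carry \emph{the same} boundary data. For $U=(\varphi_U,w_U)$, $V=(\varphi_V,w_V)$ in $H^1(\Omega,\R^2)\times H^1(\Omega)$ I introduce
\[
a_0(U,V)=\int_\Omega {\mathbb P}\nabla\varphi_U\cdot\nabla\varphi_V+\int_\Omega S(\varphi_U+\nabla w_U)\cdot(\varphi_V+\nabla w_V),
\]
let $a(U,V)$ be the same form with $S,{\mathbb P}$ replaced by $\chi_{\Omega\setminus D}S+\chi_D\widetilde{S}$, $\chi_{\Omega\setminus D}{\mathbb P}+\chi_D\widetilde{{\mathbb P}}$, and set $L(V)=\int_{\partial\Omega}\overline{Q}\,w_V+\overline{M}\cdot\varphi_V$. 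By \eqref{eq:anto-11.1}, \eqref{eq:anto-11.2}, \eqref{eq:bending-tensor} these forms are symmetric, $a-a_0$ is supported in $D$, and by \eqref{eq:convex-S-Lame}, \eqref{eq:convex-P-Lame} together with the strong convexity of $\widetilde{S},\widetilde{{\mathbb P}}$ built into \eqref{eq:4.jump+bis}--\eqref{eq:4.jump+} (resp. \eqref{eq:4.jump-bis}--\eqref{eq:4.jump-}) one has $a_0\geq0$, $a\geq0$ and $a-a_0\geq0$ (resp. $a_0-a\geq0$) as quadratic forms. Writing $U_0=(\varphi_0,w_0)$, $U=(\varphi,w)$, the weak formulation \eqref{eq:weak_form} and its analogue for \eqref{eq:pbm-with-1}--\eqref{eq:pbm-with-4} say $a_0(U_0,V)=a(U,V)=L(V)$ for all $V$, so taking $V\in\{U_0,U\}$ and recalling \eqref{eq:work-with-D}, \eqref{eq:work-without-D} one gets $W_0=a_0(U_0,U_0)$, $W=a(U,U)$, $a_0(U_0,U)=W$, $a(U,U_0)=W_0$; in particular the middle term of \eqref{eq:energy1}--\eqref{eq:energy2} equals $W_0-W$, and $(a-a_0)(U,U_0)=a(U,U_0)-a_0(U_0,U)=W_0-W$. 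Testing both weak formulations with $V=U_0-U$ and expanding, using $a_0(U,U)=W-(a-a_0)(U,U)$ and $a(U,U_0-U)=W_0-W$, gives the two further identities
\[
a_0(U_0-U,U_0-U)=(W_0-W)-(a-a_0)(U,U),\qquad (a-a_0)(U_0,U_0)=(W_0-W)+a(U_0-U,U_0-U).
\]

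Consider first the alternative \eqref{eq:4.jump+bis}--\eqref{eq:4.jump+}, so $a-a_0\geq0$. Since $a\geq0$ and $a_0\geq0$, the two displayed identities give at once $(a-a_0)(U,U)\leq W_0-W\leq(a-a_0)(U_0,U_0)$; the right inequality, combined with $\widetilde{{\mathbb P}}-{\mathbb P}\leq(\delta-1){\mathbb P}$, $\widetilde{S}-S\leq(\delta-1)S$, the ellipticity \eqref{eq:convex-P-Lame}, \eqref{eq:convex-S-Lame}, and the identity ${\mathbb P}\nabla\varphi_0\cdot\nabla\varphi_0={\mathbb P}\widehat{\nabla}\varphi_0\cdot\widehat{\nabla}\varphi_0$ (valid for $\widetilde{{\mathbb P}}$ as well, by \eqref{eq:anto-11.2}), is exactly the upper bound in \eqref{eq:energy1}. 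For the lower bound, which requires the \emph{sharp} factor $\frac1\delta$ rather than the $\frac1{1+\delta}$ obtained from $a_0(U_0-U,U_0-U)\leq W_0-W$ alone, I would argue as follows. By \eqref{eq:4.jump+bis}, \eqref{eq:4.jump+} the perturbed tensors satisfy $\widetilde{S}\leq\delta S$, $\widetilde{{\mathbb P}}\leq\delta{\mathbb P}$ on $D$, and $\delta>1$ gives the same on $\Omega\setminus D$, hence $a(V,V)\leq\delta\,a_0(V,V)$ for every $V$; taking $V=U_0-U$ and inserting into the displayed identities yields $(a-a_0)(U_0,U_0)\leq(1+\delta)(W_0-W)-\delta\,(a-a_0)(U,U)$. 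On the other hand, Cauchy--Schwarz for the nonnegative symmetric form $a-a_0$ gives $(W_0-W)^2=(a-a_0)(U,U_0)^2\leq(a-a_0)(U,U)\,(a-a_0)(U_0,U_0)$. Writing $A=(a-a_0)(U_0,U_0)$, $X=W_0-W$ (both $\geq0$) and eliminating $(a-a_0)(U,U)$ between the last two relations leaves $A^2-(1+\delta)XA+\delta X^2\leq0$, i.e. $(A-X)(A-\delta X)\leq0$; since $A\geq X$, this forces $A\leq\delta X$. Finally $(a-a_0)(U_0,U_0)=\int_D(\widetilde{{\mathbb P}}-{\mathbb P})\widehat{\nabla}\varphi_0\cdot\widehat{\nabla}\varphi_0+\int_D(\widetilde{S}-S)(\varphi_0+\nabla w_0)\cdot(\varphi_0+\nabla w_0)$, which by \eqref{eq:4.jump+bis}, \eqref{eq:4.jump+}, \eqref{eq:convex-S-Lame}, \eqref{eq:convex-P-Lame} is bounded below by $\eta$ times the left-hand integral of \eqref{eq:energy1}; together with $W_0-W\geq\frac1\delta(a-a_0)(U_0,U_0)$ this completes \eqref{eq:energy1}.

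For the alternative \eqref{eq:4.jump-bis}--\eqref{eq:4.jump-} the argument is completely parallel, with $a-a_0\geq0$ replaced by $a_0-a\geq0$ and $\delta>1$ by $0<\delta<1$: now $\widetilde{S}\geq\delta S$, $\widetilde{{\mathbb P}}\geq\delta{\mathbb P}$, so $a(V,V)\geq\delta\,a_0(V,V)$; the analogues of the two identities give $(a_0-a)(U_0,U_0)\leq W-W_0$ and $(a_0-a)(U_0,U_0)\leq(1+\delta)(W-W_0)-\delta\,(a_0-a)(U,U)$, Cauchy--Schwarz gives $(W-W_0)^2\leq(a_0-a)(U,U)\,(a_0-a)(U_0,U_0)$, and with $A=(a_0-a)(U_0,U_0)$, $X=W-W_0$ one again obtains $(A-X)(A-\delta X)\leq0$, now with $A\leq X$, hence $A\geq\delta X$. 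Squeezing $(a_0-a)(U_0,U_0)$ between $\eta$ and $(1-\delta)$ times the displayed integral via \eqref{eq:4.jump-bis}, \eqref{eq:4.jump-}, \eqref{eq:convex-S-Lame}, \eqref{eq:convex-P-Lame} then gives \eqref{eq:energy2} (with the middle term read as $W-W_0$). The one step I expect to be non-routine is this lower-bound argument --- pairing Cauchy--Schwarz for the sign-definite ``jump form'' with the ellipticity comparison $a\leq\delta a_0$ (resp. $a\geq\delta a_0$) on $D$, so that the two relations collapse to the factorization $(A-X)(A-\delta X)\leq0$; everything else is careful bookkeeping with the symmetries and the constants of \eqref{eq:constants_dependence}.
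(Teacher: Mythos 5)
The paper itself states Lemma \ref{lem:energy} without proof: Section \ref{sec:proof} moves directly from the statement to the proof of Theorem \ref{theo:D-fat}, the authors evidently regarding the lemma as standard in the size-estimates literature (e.g.\ \cite{Al-Ro}, \cite{MRV07-IUMJ}). So there is no in-paper proof to compare against; I can only assess your argument on its own merits, which I have done, and it is correct. Your two variational identities, $a_0(U_0-U,U_0-U)=(W_0-W)-(a-a_0)(U,U)$ and $(a-a_0)(U_0,U_0)=(W_0-W)+a(U_0-U,U_0-U)$, are exactly the ones used in this literature, and together with positivity of $a_0$, $a$ they immediately give the two-sided bound $(a-a_0)(U,U)\le W_0-W\le(a-a_0)(U_0,U_0)$ which yields the easy side of each display. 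The genuinely delicate step is the sharp factor $1/\delta$ (equivalently $\delta$ in the ``$-$'' case), and your route --- Cauchy--Schwarz for the sign-definite jump form applied to $(a-a_0)(U,U_0)=W_0-W$, together with the pointwise comparison $a\le\delta a_0$ (resp.\ $a\ge\delta a_0$) which you correctly observe extends to all of $\Omega$ because $a$ and $a_0$ coincide off $D$ --- gives the quadratic inequality $A^2-(1+\delta)XA+\delta X^2\le0$, and the factorization $(A-X)(A-\delta X)\le0$ does the rest once you know on which side of $X$ the quantity $A$ sits. This reproduces exactly the Alessandrini--Rosset constants ($\eta/\delta=(k-1)/k$, $\delta-1=k-1$ when $\widetilde S=kS$, $\widetilde{\mathbb P}=k\mathbb P$), so you have recovered the known sharp estimate in a self-contained way. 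The use of minor symmetry to replace $\nabla\varphi_0$ by $\widehat\nabla\varphi_0$ inside both $\mathbb P$ and $\widetilde{\mathbb P}$ is also needed and correctly invoked.

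One point worth flagging explicitly: you are right that in the case \eqref{eq:4.jump-bis}--\eqref{eq:4.jump-} the middle term of \eqref{eq:energy2} must be read as $W-W_0$. As printed, $\int_{\partial\Omega}\overline Q(w_0-w)+\overline M\cdot(\varphi_0-\varphi)$ equals $W_0-W$, which is $\le0$ in the soft-inclusion case (one sees this from $(a-a_0)(U_0,U_0)=(W_0-W)+a(U_0-U,U_0-U)$ with $(a-a_0)\le0$ and $a\ge0$), so the chain of inequalities in \eqref{eq:energy2} only makes sense with the roles of $(\varphi_0,w_0)$ and $(\varphi,w)$ swapped in the boundary integral; this is also the orientation that Theorem \ref{theo:D-fat} uses, since \eqref{eq:4.size_estimate-} is phrased in terms of $(W-W_0)/W_0$. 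So you did not introduce an error; you caught a sign slip in the statement.
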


\begin{proof}[Proof of Theorem \ref{theo:D-fat}]
Let us notice that by \eqref{eq:korn} and \eqref{eq:poinc1}, and
by the trivial estimate $\|\nabla w_0\|_{L^2(\Omega)}\leq
\|\varphi_0+\nabla w_0\|_{L^2(\Omega)} +
\|\varphi_0\|_{L^2(\Omega)}$, we have
\begin{multline}
  \label{eq:LPS10}
    \|\varphi_0\|_{H^1(\Omega)} + \frac{1}{\rho_0}\|w_0\|_{H^1(\Omega)}
        \leq \frac{C}\rho_0\left(
                \|\widehat{\nabla}\varphi_0\|_{L^2(\Omega)}+ \frac{1}{\rho_0}
                \|\varphi_0+\nabla w_0\|_{L^2(\Omega)} \right)
                \leq \\
                \leq C\left(\int_\Omega E^2(\varphi_0, w_0)\right)^{\frac{1}{2}},
\end{multline}
with $C$ only depending on $M_0$, $M_1$, $s_0$.

By standard regularity estimates for elliptic systems (see
\cite[Theorem 6.1]{Ca80}), by \eqref{eq:LPS10} and by the weak
formulation of the Neumann problem
\eqref{eq:intro-1}--\eqref{eq:intro-4}, we have
\begin{multline}
  \label{eq:proof_main1}
  \| \varphi_{0}\|_{{L}^{\infty}(D)} + \rho_0\|\widehat {\nabla} \varphi_{0}\|_{{L}^{\infty}(D)}
    + \|\nabla w_{0}\|_{{L}^{\infty}(D)}
  \leq
  C\left(\|\varphi_0\|_{H^1(\Omega)} + \frac{1}{\rho_0}\|w_0\|_{H^1(\Omega)}\right)
\leq \\
\leq
C\left(\int_\Omega E^2(\varphi_0,w_0)\right)^{\frac{1}{2}}
  \leq
  \frac{C}{\rho_0^{\frac{3}{2}}} \left ( \int_{\partial \Omega} \overline{Q}w_0 + \overline{M}\cdot
  \varphi_0
  \right )^{\frac {1}{2}},
\end{multline}
where the constant $C$ depends only on $M_0$, $M_1$, $s_0$,
$\alpha_{0}$, $\alpha_{1}$, $\gamma_{0}$, $\frac{\rho_0}{h}$,
$d_{0}$.

The lower bound for $|D|$ in \eqref{eq:4.size_estimate+},
\eqref{eq:4.size_estimate-} follows {}from the right hand side of
\eqref{eq:energy1}, \eqref{eq:energy2} and {}from
\eqref{eq:proof_main1}.

Now, let us prove the upper bound for $|D|$ in
\eqref{eq:4.size_estimate+}, \eqref{eq:4.size_estimate-}. Note
that
\begin{equation}
  \label{eq:proof_main2}
    \int_{D}\frac{h^3}{12}\xi_{0}|\widehat {\nabla} \varphi_{0}|^{2}
    +h\sigma_0|\varphi_0+\nabla w_0|^2\geq C\int_{D} E^2(\varphi_0,w_0),
\end{equation}
with $C$ only depending on $\alpha_0$, $\gamma_0$,
$\frac{\rho_0}{h}$.

Let us cover $D_{h_{1}}$ with internally non overlapping closed
squares $Q_{j}$ of side $l$, for $j=1,...,J$, with $l=\frac
{4\theta h_1}{2\sqrt 2\theta +7}$, where $\theta\in (0,1)$ is as
in Theorem \ref{theo:LPS}. By the choice of $l$ the squares
$Q_{j}$ are contained in $D$. Hence
\begin{equation}
  \label{eq:proof_main3}
  \int_{D} E^2(\varphi_0,w_0)
  \geq
  \int_{\bigcup_{j=1}^{J} Q_{j}} E^2(\varphi_0,w_0)
  \geq
  \frac {|D_{h_{1}}|}{l ^{n}}
  \int_{Q_{\bar {j}}} E^2(\varphi_0,w_0),
\end{equation}
where $\bar {j}$ is such that $\int_{Q_{\bar {j}}}
E^2(\varphi_0,w_0)=\min_{j}\int_{Q_{j}} E^2(\varphi_0,w_0)$. Let
$\bar {x}$ be the center of $Q_{\bar {j}}$. {}From
\eqref{eq:proof_main2}, \eqref{eq:proof_main3}, estimate
\eqref{eq:LPS} with $x=\bar {x}$ and $\rho=l/2$,
\eqref{eq:convex-S-Lame}, \eqref{eq:convex-P-Lame} and {from} the
weak formulation of \eqref{eq:intro-1}--\eqref{eq:intro-4} we have
\begin{equation}
  \label{eq:proof_main4}
  \int_{D}\frac{h^3}{12}\xi_{0}|\widehat {\nabla} \varphi_{0}|^{2}
    +h\sigma_0|\varphi_0+\nabla w_0|^2
  \geq
  K |D| W_0,
\end{equation}
where $K$ depends only on $\alpha_{0}$, $\alpha_{1}$,
$\gamma_{0}$, $M_{0}$, $M_1$, $s_0$, $\frac{\rho_0}{h}$, $h_{1}$
and ${\mathcal F}$.

The upper bound for $|D|$ in \eqref{eq:4.size_estimate+},
\eqref{eq:4.size_estimate-} follows {}from the left hand side of
\eqref{eq:energy1},\eqref{eq:energy2} and {}from
\eqref{eq:proof_main4}.
\end{proof}

\section{Proof of Theorem \ref{theo:LPS}} \label{sec:proofLPS}

Let us premise the following Proposition.
\begin{prop}
   \label{prop:dopo_dualità}
Let $\Omega$ be a bounded domain in $\R^2$, with boundary of
Lipschitz class with constants $\rho_0$, $M_0$, satisfying
\eqref{eq:Omegabound}.
Let $S \in
C^{0,1}(\overline{\Omega}, {\M}^{2})$ and $\mathbb P \in C^{0,1}
(\overline{\Omega}, {\cal L} ({\M}^{2}, {\M}^{2}))$ given by
\eqref{eq:shearing-tensor}, \eqref{eq:bending-tensor} with the Lam\'e moduli satisfying
\eqref{eq:Lame-ell},
\eqref{eq:Lame-reg}.
Let $\overline{M} \in H^{-
\frac{1}{2}} (\partial \Omega, \R^2)$ and $\overline{Q} \in H^{
-\frac{1}{2}} (\partial \Omega)$ satisfy the compatibility
conditions \eqref{eq:compatibilita-carico}. Let $(\varphi_0, w_0) \in
H^1(\Omega, \R^2) \times H^1(\Omega)$ be the solution of the
problem \eqref{eq:intro-1}--\eqref{eq:intro-4}, normalized by the
conditions \eqref{eq:normalizzazione}. Then there exists a
positive constant $C$ only depending on $M_0$, $M_1$,
$\alpha_0$, $\alpha_1$, $\gamma_0$, $\frac{\rho_0}{h}$, such
that
\begin{equation}
  \label{eq:dopo_dualità}
    \|\overline{M}\|_{H^{-1}(\partial \Omega,\R^2)}+\rho_0 \|\overline{Q}\|_{H^{-1}(\partial \Omega)}\leq C\rho_0^2
        \left(\|\varphi_0\|_{L^{2}(\partial \Omega,\R^2)}+\frac{1}{\rho_0} \|w_0\|_{L^{2}(\partial \Omega)}
        \right).
\end{equation}
\end{prop}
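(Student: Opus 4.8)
The plan is to establish \eqref{eq:dopo_dualità} by a duality argument: the $H^{-1}(\partial\Omega)$ norm of the Neumann data is realized as a supremum of boundary pairings against $H^1(\partial\Omega)$ traces, and each such pairing can be rewritten, via the weak formulation \eqref{eq:weak_form}, as a bulk bilinear form in which the trace is lifted to an $H^2$ extension. First I would fix a pair of test functions $(\psi, v)$ on the boundary with $\|\psi\|_{H^1(\partial\Omega)} + \rho_0^{-1}\|v\|_{H^1(\partial\Omega)} \le 1$ and, by the compatibility conditions \eqref{eq:compatibilita-carico}, observe that we may subtract off an infinitesimal rigid motion so that the pairing $\int_{\partial\Omega}\overline Q v + \overline M\cdot\psi$ is unchanged while $(\psi,v)$ is normalized to have zero averages (and zero average of the associated skew part). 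Then I would choose a bounded extension operator, built from the Lipschitz regularity of $\partial\Omega$ with constants $\rho_0$, $M_0$, producing $(\Psi, V)\in H^2(\Omega,\R^2)\times H^2(\Omega)$ with boundary traces $(\psi, v)$ and with $\|\Psi\|_{H^1(\Omega)} + \rho_0\|\nabla\Psi\|_{H^1(\Omega)}$ etc.\ controlled (in dimensionally homogeneous norms) by the $H^1(\partial\Omega)$ norms of the data; here I would need the trace/extension estimate to involve only $M_0$, $M_1$.

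Next, plugging $(\Psi, V)$ into \eqref{eq:weak_form} gives
\[
\int_{\partial\Omega}\overline Q v + \overline M\cdot\psi
= \int_\Omega \mathbb P\nabla\varphi_0\cdot\nabla\Psi + \int_\Omega S(\varphi_0+\nabla w_0)\cdot(\Psi+\nabla V),
\]
and this is where the $L^2(\partial\Omega)$ norms of $\varphi_0, w_0$ should enter rather than $H^1(\Omega)$ norms. To get the gain of one derivative, I would integrate by parts in the bulk integrals, moving all derivatives off of $\varphi_0$ and $w_0$ and onto $\Psi$ and $V$; the interior terms then carry two derivatives of the extension (controlled by $\|\psi\|_{H^1(\partial\Omega)}$, $\|v\|_{H^1(\partial\Omega)}$) paired against $\varphi_0, w_0$ themselves, while the boundary terms generated by the integration by parts pair $\varphi_0|_{\partial\Omega}$ and $w_0|_{\partial\Omega}$ against first-order boundary expressions in $(\psi, v)$. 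The Lipschitz regularity of the coefficients \eqref{eq:Lame-reg}, \eqref{eq:lip_operatori} is exactly what permits moving the derivatives through $S$ and $\mathbb P$, and the normalization \eqref{eq:normalizzazione} (together with the zero-average choice of $(\psi, v)$) lets me discard any constant-mode contributions. Taking the supremum over admissible $(\psi, v)$ then yields \eqref{eq:dopo_dualità}, after tracking that every constant produced depends only on $M_0$, $M_1$, $\alpha_0$, $\alpha_1$, $\gamma_0$ and $\rho_0/h$ (the latter coming from the mismatch between the $h$- and $h^3$-weights of $S$ and $\mathbb P$ in \eqref{eq:convex-S-Lame}, \eqref{eq:convex-P-Lame}).

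The main obstacle I expect is the boundary integration by parts on a merely Lipschitz domain: $\partial\Omega$ has no classical second fundamental form, and the tangential integration by parts that converts $\int_\Omega \mathbb P\nabla\varphi_0\cdot\nabla\Psi$ into terms with $\varphi_0$ undifferentiated must be carried out carefully so that no curvature-type quantity appears. I would handle this by doing the integration by parts at the level of the interior only (so that the boundary terms are the honest ones coming from Green's identity, namely $\int_{\partial\Omega}(\mathbb P\nabla\varphi_0)n\cdot\Psi$ and similar), and then recognizing $(\mathbb P\nabla\varphi_0)n = \overline M$, $(S(\varphi_0+\nabla w_0))\cdot n = \overline Q$ from \eqref{eq:intro-3}, \eqref{eq:intro-4}; this brings us back to the original data but now paired with $(\psi, v)\in H^1(\partial\Omega)$, which is a regularity improvement over the $H^{-1/2}$ duality. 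In effect the estimate should be proved by iterating the weak formulation rather than by a single integration by parts — equivalently, one shows that the solution operator mapping $(\overline M,\overline Q)\in H^{-1}$ to the boundary traces $(\varphi_0|_{\partial\Omega}, w_0|_{\partial\Omega})$ is bounded into $L^2(\partial\Omega)$, and \eqref{eq:dopo_dualità} is the statement that its adjoint (or a suitable left inverse) is bounded the other way; the delicate point is to keep the constants scale-invariant and free of any regularity of $\partial\Omega$ beyond Lipschitz.
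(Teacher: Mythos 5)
Your final paragraph correctly identifies the underlying structure: the estimate \eqref{eq:dopo_dualità} is dual to the boundedness of the Dirichlet-to-Neumann map from $H^{1}(\partial\Omega)\times H^{1}(\partial\Omega)$ to $L^{2}(\partial\Omega)\times L^{2}(\partial\Omega)$, and that is indeed how the paper proceeds (via Lemma \ref{lem:prima_dualità}, the self-adjointness of $\Lambda$ with respect to the boundary pairing, and a density argument). But the concrete mechanism you propose for the pairing step does not work, and the hard analytic ingredient is missing.

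Two specific gaps. First, you want to extend $(\psi,v)\in H^{1}(\partial\Omega,\R^2)\times H^{1}(\partial\Omega)$ to $(\Psi,V)\in H^{2}(\Omega,\R^2)\times H^{2}(\Omega)$ with control by the $H^{1}(\partial\Omega)$ norms; there is no such bounded extension. The trace of an $H^2(\Omega)$ function lies in $H^{3/2}(\partial\Omega)$, a strictly smaller space than $H^{1}(\partial\Omega)$, and on a Lipschitz domain one can only extend $H^{1}(\partial\Omega)$ data to $H^{3/2}(\Omega)$ (and even that is borderline). Second, even if one could perform the interior integration by parts you describe, the surviving bulk term has the form
\begin{equation*}
\int_\Omega \varphi_0\cdot \divrg(\mathbb P\nabla\Psi)
\end{equation*}
and its companions, which are controlled by $\|\varphi_0\|_{L^2(\Omega)}$, not by $\|\varphi_0\|_{L^2(\partial\Omega)}$; there is no way to pass from the former to the latter, so the right-hand side of \eqref{eq:dopo_dualità} never appears. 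Your alternative, integrating by parts ``at the level of the interior only'' so that the boundary terms are $\int_{\partial\Omega}(\mathbb P\nabla\varphi_0)n\cdot\Psi$ and then using $(\mathbb P\nabla\varphi_0)n=\overline M$, is circular: you return to the very pairing $\int_{\partial\Omega}\overline M\cdot\psi+\overline Q v$ you started from.

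What you are missing is precisely the content of Lemma \ref{lem:prima_dualità}: if $(\psi^*,v^*)$ solves the Dirichlet problem with boundary data $(\psi,v)\in H^{1}(\partial\Omega,\R^2)\times H^{1}(\partial\Omega)$, then the conormal derivatives $(\mathbb P\nabla\psi^*)n$, $S(\psi^*+\nabla v^*)\cdot n$ belong to $L^{2}(\partial\Omega)$ with a quantitative bound. This is a genuine gain of half a derivative at the boundary and requires the localized Rellich-type estimates of Lemma \ref{lem:exlemma5.2_4.3} on boundary rectangles (with cutoffs satisfying \eqref{eq:cut1}--\eqref{eq:cut5}), the use of the Reissner--Mindlin equations to lower the order of the divergence terms via \eqref{eq:abbasso_deriv_w}--\eqref{eq:abbasso_deriv_varphi}, and an approximation argument to remove the a priori $H^{3/2}$ regularity assumption. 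Once that lemma is available, the adjointness computation in \eqref{eq:autoagg_conti} — which rests on applying Green's identity to \emph{two} solutions, not on lifting generic boundary data — and the density of $H^{1}(\partial\Omega)$ in $L^{2}(\partial\Omega)$ deliver \eqref{eq:dopo_dualità} as a corollary. In short, the duality skeleton you propose is correct, but the skeleton has no flesh without a Rellich-type trace estimate for the DtN map, and that estimate cannot be obtained by the extension-and-integration-by-parts route you sketch.
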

\begin{rem}
   \label{rem:no_isotropia}
    Let us highlight that the above Proposition, as well as Lemma \ref{lem:prima_dualità}, on which its proof is based, hold true for anisotropic materials.
\end{rem}
\begin{proof}[Proof of Theorem \ref{theo:LPS}]
By Proposition $5.5$ in \cite{ARRV} and by \eqref{eq:OmegarhoLip},
there exists $h_2 >0$ only depending on $M_0$ such that $\Omega_{
\frac{4}{\theta}\rho}$ is connected and of Lipschitz class with constant $\rho_0$, $M_0$, for every
$\rho \leq \frac{\theta}{4}h_2\rho_0$. Let $\rho \leq \frac{\theta}{4}h_2\rho_0$.

Given any point $y \in \Omega_{\frac{4}{\theta}\rho}$, let
$\gamma$ be an arc in $\Omega_{\frac{4}{\theta}\rho}$ joining $x$
and $y$. Let us define the points $\{x_i\}$, $i=1,...,L$, as
follows: $x_1=x$, $x_{i+1}=\gamma (t_i)$, where $t_i =\max \{ t \
\hbox{s.t.} \ |\gamma (t)-x_i|=2\rho\}$ if $|x_i -y| > 2\rho$,
otherwise let $i=L$ and stop the process. By construction, the
disks $B_\rho(x_i)$ are pairwise disjoint and
$|x_{i+1}-x_i|=2\rho$, $i=1,...,L-1$, $|x_L -y|\leq 2 \rho$.

By applying Theorem \ref{theo:three-sphere_energy} and denoting
$E(\varphi_0, w_0)=E$ to simplify the notation, we have
\begin{equation}
  \label{LPS-p1-e1}
    \int_{B_{\rho}( x_{i+1})} E^2
    \leq C\
    \left ( \frac{\rho_0}{\rho}\right )^2
    \left(\int_{B_{\rho}(x_i)} E^2 \right)^{\tau}
    \left(\int_{B_{\frac{7}{2\theta}\rho}(x_i)}
    E^2\right)^{1-\tau},
\end{equation}
for $i=1,...,L-1$, where $\tau \in (0,1)$ and $C>0$ only depend on
$\alpha_0$, $\alpha_1$, $\gamma_0$ and $\frac{\rho_0}{h}$.

Let us apply the Caccioppoli inequality \eqref{eq:3sfere9} to
estimate {}from above the second integral on the right hand side
of \eqref{LPS-p1-e1}, namely
\begin{equation}
  \label{LPS-p2-e1}
    \int_{B_{\frac{7}{2\theta}\rho}(x_i)}  E^2
        \leq
    \frac{C}{\rho^2} \int_{B_{\frac{4}{\theta}\rho}(x_i)}
        |\varphi_0^*|^2 +\frac{1}{\rho_0^2}|w_0^*|^2
    \leq
    \frac{C}{\rho^2}
    \int_{\Omega}
    |\varphi_0^*|^2 +\frac{1}{\rho_0^2}|w_0^*|^2,
\end{equation}
where $\varphi_0^*=\varphi_0 -c$, $w_0^* = w_0 +c \cdot (x-
\overline{x}) -d$, with $c \in \R^2$, $d \in \R$, $\overline{x}
\in \R^2$ to be chosen later, and where $C>0$ only depends on
$\alpha_0$, $\alpha_1$, $\gamma_0$ and $\frac{\rho_0}{h}$.

By \eqref{LPS-p1-e1} and \eqref{LPS-p2-e1}, and using an iteration
argument, we have
\begin{equation}
  \label{LPS-p2-e2}
    \frac
    {\rho^2  \int_{B_{\rho}(y)}  E^2  }
    { \int_{\Omega}
    |\varphi_0^*|^2 +\frac{1}{\rho_0^2}|w_0^*|^2  }
    \leq
    C \left ( \frac{\rho_0}{\rho}\right )^2
    \left (
    \frac
    {\rho^2  \int_{B_{\rho}(x)}  E^2  }
    {\int_{\Omega}
    |\varphi_0^*|^2 +\frac{1}{\rho_0^2}|w_0^*|^2 }
    \right )^{\tau^L},
\end{equation}
where, by \eqref{eq:Omegabound}, $L \leq C_1 \left (
\frac{\rho_0}{\rho}\right )^2$, with $C_1>0$ only depending on
$M_1$, and $C$ is as above.

Let us tessellate $ \Omega_{ \frac{5}{\theta}\rho}$ with internally non
overlapping closed squares of side $l = \frac{2\rho}{\sqrt{2}}$.
By \eqref{eq:Omegabound}, their number is dominated by $N=
\frac{|\Omega|}{2\rho^2}\leq C \left ( \frac{\rho_0}{\rho} \right
)^2$, with $C>0$ only depending on $M_1$. Then, by
\eqref{LPS-p2-e2} we have
\begin{equation}
  \label{LPS-p3-e1}
    \frac
    {\rho^2  \int_{  \Omega_{  \frac{5}{\theta}\rho  }  }  E^2  }
    { \int_{\Omega}
    |\varphi_0^*|^2 +\frac{1}{\rho_0^2}|w_0^*|^2  }
    \leq
    C \left ( \frac{\rho_0}{\rho}\right )^4
    \left (
    \frac
    {\rho^2  \int_{B_{\rho}(x)}  E^2  }
    {\int_{\Omega}
    |\varphi_0^*|^2 +\frac{1}{\rho_0^2}|w_0^*|^2 }
    \right )^{\tau^L},
\end{equation}
where $C>0$ only depends on $\alpha_0$, $\alpha_1$, $\gamma_0$, $
\frac{\rho_0}{h}$ and $M_1$.

In the next step, we shall estimate {}from below $\int_{  \Omega_{
\frac{5}{\theta}\rho  }  }  E^2 $. Let us choose
\begin{equation}
  \label{LPS-p3-e2}
    c= \frac{1}{| \Omega_{  \frac{5}{\theta}\rho  } | } \int_{\Omega_{  \frac{5}{\theta}\rho
    }} \varphi_0, \quad d= \frac{1}{| \Omega_{  \frac{5}{\theta}\rho  } | } \int_{\Omega_{  \frac{5}{\theta}\rho
    }} w_0, \quad \overline{x} = \frac{1}{| \Omega_{  \frac{5}{\theta}\rho  } | } \int_{\Omega_{  \frac{5}{\theta}\rho
    }} x
\end{equation}
and let $\rho \leq \frac{\theta}{5}h_2 \rho_0$, so that $\Omega_{
\frac{5}{\theta}\rho  }$ is connected and of Lipschitz class with
constants $\rho_0$, $M_0$. By using Korn inequality
\eqref{eq:korn} and Poincar\'{e} inequality \eqref{eq:poinc1} in
\eqref{LPS-p3-e1}, and recalling that
$E(\varphi_0^*,w_0^*)=E(\varphi_0,w_0)$, we have
\begin{equation}
  \label{LPS-p4-e1}
    C \left ( \frac{\rho}{\rho_0}\right )^6
    \
    \frac
    {\int_{  \Omega_{  \frac{5}{\theta}\rho  }  }  |\varphi_0^*|^2 + \frac{1}{\rho_0^2 }|w_0^*|^2  }
    { \int_{\Omega}
    |\varphi_0^*|^2 +\frac{1}{\rho_0^2}|w_0^*|^2  }
    \leq
    \left (
    \frac
    {\rho^2  \int_{B_{\rho}(x)}  E^2  }
    {\int_{\Omega}
    |\varphi_0^*|^2 +\frac{1}{\rho_0^2}|w_0^*|^2 }
    \right )^{\tau^L},
\end{equation}
where $C>0$ only depends on $M_0$, $M_1$, $s_0$, $\alpha_0$,
$\alpha_1$, $\gamma_0$ and $ \frac{\rho_0}{h}$.

Recalling that $\int_\Omega \varphi_0 =0$, $\int_\Omega w_0 =0$,
and since
\begin{equation}
  \label{LPS-p4-e1bis}
    |\Omega \setminus \Omega_{  \frac{5}{\theta}\rho  }| \leq
    C\rho\rho_0, \quad |\Omega_{  \frac{5}{\theta}\rho  }| \geq C
    \rho_0^2,
\end{equation}
with $C>0$ only depending on $M_0$ and $M_1$ (see \cite[Appendix]{Al-Ro} for
details), by H\"{o}lder inequality we have
\begin{equation}
  \label{LPS-p4-e2}
    |c| \leq \frac{C}{\rho_0} \left(\frac{\rho}{\rho_0}\right) ^{\frac{1}{2}}
    \left ( \int_{  \Omega \setminus \Omega_{  \frac{5}{\theta}\rho  }  }
    |\varphi_0|^2 \right )^{ \frac{1}{2}  },
\end{equation}
\begin{equation}
  \label{LPS-p4-e3}
    |d| \leq \frac{C}{\rho_0} \left(\frac{\rho}{\rho_0}\right) ^{\frac{1}{2}}
    \left ( \int_{  \Omega \setminus \Omega_{  \frac{5}{\theta}\rho  }  }
    |w_0|^2 \right )^{ \frac{1}{2}  }
\end{equation}
and, therefore,
\begin{equation}
  \label{LPS-p4-e4}
    \left|
    \left ( \int_{  \Omega \setminus \Omega_{  \frac{5}{\theta}\rho  }  }
    |\varphi_0^*|^2 \right )^{ \frac{1}{2}  } - \left ( \int_{  \Omega \setminus \Omega_{  \frac{5}{\theta}\rho  }  }
    |\varphi_0|^2 \right )^{ \frac{1}{2}  }
    \right|
    \leq
    C_1 \frac{\rho}{\rho_0} \left ( \int_{  \Omega \setminus \Omega_{  \frac{5}{\theta}\rho  }  }
    |\varphi_0|^2 \right )^{ \frac{1}{2}  },
\end{equation}
\begin{equation}
  \label{LPS-p4-e5}
    \left|
    \left ( \int_{  \Omega  }
    |\varphi_0^*|^2 \right )^{ \frac{1}{2}  } - \left ( \int_{  \Omega   }
    |\varphi_0|^2 \right )^{ \frac{1}{2}  }
    \right|
    \leq
    C_1 \left ( \frac{\rho}{\rho_0} \right )^{ \frac{1}{2}  } \left ( \int_{  \Omega \setminus \Omega_{  \frac{5}{\theta}\rho  } }
    |\varphi_0|^2 \right )^{ \frac{1}{2}  },
\end{equation}
where $C_1>0$ depends only on $M_0$ and $M_1$. Assuming, in
addition, $\rho \leq \min \{ \frac{1}{2C_1},
\frac{1}{4C_1^2}\}\rho_0$, {}from \eqref{LPS-p4-e4},
\eqref{LPS-p4-e5} we have
\begin{equation}
  \label{LPS-p5-e1}
    \int_{  \Omega \setminus \Omega_{  \frac{5}{\theta}\rho  }  }
    |\varphi_0^*|^2
    \leq
    \frac{9}{4} \int_{  \Omega \setminus \Omega_{  \frac{5}{\theta}\rho  }  }
    |\varphi_0|^2,
\end{equation}
\begin{equation}
  \label{LPS-p5-e2}
    \int_{  \Omega   }
    |\varphi_0^*|^2
    \geq
    \frac{1}{4}  \int_{  \Omega }
    |\varphi_0|^2.
\end{equation}
By \eqref{LPS-p4-e2}, \eqref{LPS-p4-e3} we can estimate
\begin{multline}
  \label{LPS-p5-e3}
    \left|
    \left ( \int_{  \Omega \setminus \Omega_{  \frac{5}{\theta}\rho  }  }
    |w_0^*|^2 \right )^{ \frac{1}{2}  } - \left ( \int_{  \Omega \setminus \Omega_{  \frac{5}{\theta}\rho  }  }
    |w_0|^2 \right )^{ \frac{1}{2}  }
    \right|
    \leq
    \left ( \int_{  \Omega \setminus \Omega_{  \frac{5}{\theta}\rho  }  }
    |c \cdot (x-\overline{x})+d|^2 \right )^{ \frac{1}{2}  } \leq
    \\
    \leq
    C (\rho_0 |c| + |d|) | \Omega \setminus \Omega_{  \frac{5}{\theta}\rho  }
    |^{  \frac{1}{2}  }
    \leq
    C_2 \rho
    \left (
    \left ( \int_{  \Omega \setminus \Omega_{  \frac{5}{\theta}\rho  }  }
    |\varphi_0|^2 \right )^{ \frac{1}{2}  } +
    \frac{1}{\rho_0}
    \left ( \int_{  \Omega \setminus \Omega_{  \frac{5}{\theta}\rho  }  }
    |w_0|^2 \right )^{ \frac{1}{2}  }
    \right )
\end{multline}
and, taking the squares, we obtain
\begin{equation}
  \label{LPS-p5-e4}
     \int_{  \Omega \setminus \Omega_{  \frac{5}{\theta}\rho  }  }
    |w_0^*|^2
    \leq
    \left (
    2 + 4 C_2^2 \left ( \frac{\rho}{\rho_0}\right )^2\right )
    \int_{  \Omega \setminus \Omega_{  \frac{5}{\theta}\rho  }  }
    |w_0|^2
    +
    4C_2^2 \rho^2 \int_{  \Omega \setminus \Omega_{  \frac{5}{\theta}\rho  }  }
    |\varphi_0|^2,
\end{equation}
where $C_2>0$ only depends on $M_0$ and $M_1$. {}From
\eqref{LPS-p5-e1} and \eqref{LPS-p5-e4}, and assuming also $\rho
\leq \frac{3}{4C_2}\rho_0$, we have
\begin{equation}
  \label{LPS-p6-e1}
    \int_{ \Omega \setminus \Omega_{  \frac{5}{\theta}\rho  }  }  |\varphi_0^*|^2 + \frac{1}{\rho_0^2 }|w_0^*|^2
    \leq
    \frac{9}{2}
    \int_{ \Omega \setminus \Omega_{  \frac{5}{\theta}\rho  }  }  |\varphi_0|^2 + \frac{1}{\rho_0^2
    }|w_0|^2.
\end{equation}
By repeating calculations similar to those performed in obtaining
\eqref{LPS-p5-e3}, we have
\begin{equation}
  \label{LPS-p5-e1bis}
    \left|
    \left ( \int_{  \Omega }
    |w_0^*|^2 \right )^{ \frac{1}{2}  } - \left ( \int_{  \Omega  }
    |w_0|^2 \right )^{ \frac{1}{2}  }
    \right|
    \leq
    C_3 (\rho \rho_0)^{ \frac{1}{2} }
    \left (
    \left ( \int_{  \Omega  }
    |\varphi_0|^2 \right )^{ \frac{1}{2}  } +
    \frac{1}{\rho_0}
    \left ( \int_{  \Omega  }
    |w_0|^2 \right )^{ \frac{1}{2}  }
    \right ),
\end{equation}
where $C_3>0$ only depends on $M_0$ and $M_1$. Taking the squares,
we deduce
\begin{equation}
  \label{LPS-p6-e2}
    \frac{1}{\rho_0^2}
    \int_{  \Omega }
    |w_0^*|^2
    \geq
     \frac{1}{\rho_0^2}
    \int_{  \Omega }
    |w_0|^2
    +
    \left (
    2C_3^2 \frac{\rho}{\rho_0}-2\sqrt{2}C_3 \left (
    \frac{\rho}{\rho_0}\right )^{  \frac{1}{2} }\right )
    \left ( \int_{  \Omega  }
    |\varphi_0|^2  +
    \frac{1}{\rho_0^2}
    |w_0|^2 \right ),
\end{equation}
where $C_3>0$ only depends on $M_0$ and $M_1$. By
\eqref{LPS-p5-e2} and \eqref{LPS-p6-e2}, and taking $\rho \leq
\frac{1}{2\cdot 16^2 C_3^2}\rho_0$, we have
\begin{equation}
  \label{LPS-p6-e3}
    \int_{\Omega}
    |\varphi_0^*|^2 +\frac{1}{\rho_0^2}|w_0^*|^2
    \geq
    \frac{1}{8}
    \int_{\Omega}
    |\varphi_0|^2 +\frac{1}{\rho_0^2}|w_0|^2.
\end{equation}
Let us rewrite the quotient appearing on the left hand side of
\eqref{LPS-p4-e1} as
\begin{equation}
  \label{LPS-p6-e4}
    \frac
    {\int_{  \Omega_{  \frac{5}{\theta}\rho  }  }  |\varphi_0^*|^2 + \frac{1}{\rho_0^2 }|w_0^*|^2  }
    { \int_{\Omega}
    |\varphi_0^*|^2 +\frac{1}{\rho_0^2}|w_0^*|^2  }
    = 1 -
     \frac
    {\int_{  \Omega \setminus \Omega_{  \frac{5}{\theta}\rho  }  }  |\varphi_0^*|^2 + \frac{1}{\rho_0^2 }|w_0^*|^2  }
    { \int_{\Omega}
    |\varphi_0^*|^2 +\frac{1}{\rho_0^2}|w_0^*|^2  }.
\end{equation}
By \eqref{LPS-p6-e1} and \eqref{LPS-p6-e3} we have
\begin{equation}
  \label{LPS-p7-e1}
    \frac
    {\int_{  \Omega \setminus \Omega_{  \frac{5}{\theta}\rho  }  }  |\varphi_0^*|^2 + \frac{1}{\rho_0^2 }|w_0^*|^2  }
    { \int_{\Omega}
    |\varphi_0^*|^2 +\frac{1}{\rho_0^2}|w_0^*|^2  }
    \leq 36 \
    \frac
    {\int_{  \Omega \setminus \Omega_{  \frac{5}{\theta}\rho  }  }  |\varphi_0|^2 + \frac{1}{\rho_0^2 }|w_0|^2  }
    { \int_{\Omega}
    |\varphi_0|^2 +\frac{1}{\rho_0^2}|w_0|^2  }.
\end{equation}
{}From H\"{o}lder's inequality, Sobolev embedding theorem and
\eqref{LPS-p4-e1bis} we have
\begin{equation}
  \label{LPS-p7-e2}
    \int_{  \Omega \setminus \Omega_{  \frac{5}{\theta}\rho  }  }  |\varphi_0|^2
    \leq C \rho^{1- \frac{2}{p}}\rho_0^{1+ \frac{2}{p}}
    \int_\Omega |\nabla \varphi_0|^2,
\end{equation}
\begin{equation}
  \label{LPS-p7-e3}
    \int_{  \Omega \setminus \Omega_{  \frac{5}{\theta}\rho  }  }  |w_0|^2
    \leq C \rho^{1- \frac{2}{p}}\rho_0^{1+ \frac{2}{p}}
    \int_\Omega |\nabla w_0|^2,
\end{equation}
with $C>0$ only depending on $M_0$ and $M_1$, and $p$ a given
number, $p >2$, for instance $p=3$. By \eqref{LPS-p7-e2} and \eqref{LPS-p7-e3}, we
have
\begin{equation}
  \label{LPS-p7-e4}
    \frac
    {\int_{  \Omega \setminus \Omega_{  \frac{5}{\theta}\rho  }  }  |\varphi_0|^2 + \frac{1}{\rho_0^2 }|w_0|^2  }
    { \int_{\Omega}
    |\varphi_0|^2 +\frac{1}{\rho_0^2}|w_0|^2  }
    \leq
    C \rho^{1- \frac{2}{p}}\rho_0^{1+ \frac{2}{p}}
    \frac
    {\int_{  \Omega  }  |\nabla \varphi_0|^2 + \frac{1}{\rho_0^2 }|\nabla w_0|^2  }
    { \int_{\Omega}
    |\varphi_0|^2 +\frac{1}{\rho_0^2}|w_0|^2  },
\end{equation}
with $C$ and $p$ as above.

Now, let us recall the following trace inequality (see
\cite[Theorem 1.5.1.10]{Gris85})
\begin{equation}
  \label{LPS-p8-e1}
    \int_{\partial \Omega} |w_0|^2 \leq
    C
    \left (
    \left (
    \int_{\Omega}
    |\nabla w_0|^2
    \right )^{\frac{1}{2}}
        \cdot
    \left (
    \int_{\Omega}
    | w_0|^2
    \right )^{\frac{1}{2}}
    +\frac{1}{\rho_0}
    \int_{\Omega}
    | w_0|^2
    \right ),
\end{equation}
with $C$ only depending on $M_0$ and $M_1$. Therefore, by
\eqref{LPS-p8-e1} and Poincar\'{e} inequality \eqref{eq:poinc1},
\begin{equation}
  \label{LPS-p8-e2}
    \int_{\partial \Omega} |w_0|^2 \leq
    C\rho_0
    \left (
    \int_{\Omega}
    | w_0|^2
    \right )^{\frac{1}{2}}
    \left (
    \int_{\Omega}
    |\nabla \varphi_0|^2
    +
    \frac{1}{\rho_0^2}
    |\nabla w_0|^2
    \right )^{\frac{1}{2}},
\end{equation}
where $C>0$ only depends on $M_0$ and $M_1$. Similarly, by a trace
inequality analogous to \eqref{LPS-p8-e1} and by Poincar\'{e}
inequality \eqref{eq:poinc1}, we have
\begin{equation}
  \label{LPS-p8-e3}
    \int_{\partial \Omega} |\varphi_0|^2 \leq
    C
    \left (
    \int_{\Omega}
    | \varphi_0|^2
    \right )^{\frac{1}{2}}
    \left (
    \int_{\Omega}
    |\nabla \varphi_0|^2
    +
    \frac{1}{\rho_0^2}
    |\nabla w_0|^2
    \right )^{\frac{1}{2}},
\end{equation}
with $C>0$ only depending on $M_0$ and $M_1$. Therefore, by
\eqref{LPS-p8-e2} and \eqref{LPS-p8-e3} we have
\begin{equation}
  \label{LPS-p8-e4}
    \int_{\Omega}
    |\varphi_0|^2
    +
    \frac{1}{\rho_0^2}
    |w_0|^2
    \geq
    C
    \
    \frac
    { \left ( \int_{  \partial \Omega  }  |\varphi_0|^2 \right )^2 + \frac{1}{\rho_0^4 } \left ( \int_{  \partial \Omega  } |w_0|^2 \right )^2 }
    { \int_{\Omega}
    |\nabla \varphi_0|^2 +\frac{1}{\rho_0^2}|\nabla w_0|^2  }.
\end{equation}
with $C>0$ only depending on $M_0$ and $M_1$. {}From  \eqref{eq:dir7},
\eqref{eq:dopo_dualità} and \eqref{LPS-p8-e4},
we deduce
\begin{equation}
  \label{LPS-p9-e1}
    \frac
    {  \int_{  \Omega  }  |\nabla \varphi_0|^2  + \frac{1}{\rho_0^2 } |\nabla w_0|^2  }
    {  \int_{  \Omega  }  | \varphi_0|^2  + \frac{1}{\rho_0^2 } | w_0|^2  }
    \leq \frac{C}{\rho_0^2}  {\mathcal F}^4,
\end{equation}
with $C>0$ only depending on $M_0$, $M_1$, $s_0$, $\alpha_0$,
$\alpha_1$, $\gamma_0$ and $ \frac{\rho_0}{h}$. {}From
\eqref{LPS-p7-e4} and \eqref{LPS-p9-e1}, there exists $C>0$ only
depending on $M_0$, $M_1$, $s_0$, $\alpha_0$, $\alpha_1$,
$\gamma_0$ and $ \frac{\rho_0}{h}$, such that if we further assume
$\rho \leq \left ( \frac{1}{72C {\mathcal F}^4 } \right )^{
\frac{p}{p-2} }\rho_0$, where $p > 2$ is as in \eqref{LPS-p7-e4},
then
\begin{equation}
  \label{LPS-p9-e2}
    \frac
    {\int_{  \Omega \setminus \Omega_{  \frac{5}{\theta}\rho  }  }  |\varphi_0|^2 + \frac{1}{\rho_0^2 }|w_0|^2  }
    { \int_{\Omega}
    |\varphi_0|^2 +\frac{1}{\rho_0^2}|w_0|^2  }
    \leq
    \frac{1}{72} .
\end{equation}
Therefore, {}from \eqref{LPS-p4-e1}, \eqref{LPS-p6-e4},
\eqref{LPS-p7-e1} and \eqref{LPS-p9-e2}, we have
\begin{equation}
  \label{LPS-p9-e3}
    \left (
    \frac
    {\rho^2  \int_{B_{\rho}(x)}  E^2  }
    {\int_{\Omega}
    |\varphi_0^*|^2 +\frac{1}{\rho_0^2}|w_0^*|^2 }
    \right )^{\tau^L}
    \geq
    C \left ( \frac{\rho}{\rho_0}\right )^6
\end{equation}
and, by \eqref{LPS-p6-e3},
\begin{equation}
  \label{LPS-p10-e1}
    \int_{B_{\rho}(x)}  E^2
    \geq
    C \left ( \frac{\rho}{\rho_0}\right )^{ \frac{6}{\tau^L}  }
    \frac{1}{\rho^2}
    \int_{\Omega}
    |\varphi_0|^2 +\frac{1}{\rho_0^2}|w_0|^2 ,
\end{equation}
with $C>0$ only depending on $M_0$, $M_1$, $s_0$, $\alpha_0$,
$\alpha_1$, $\gamma_0$ and $ \frac{\rho_0}{h}$.

The integral on the right hand side of \eqref{LPS-p10-e1} can be
estimated {}from below first by using \eqref{LPS-p9-e1}, namely
\begin{equation}
  \label{LPS-p10-e2}
    \int_{B_{\rho}(x)}  E^2
    \geq
    \frac{C}{ {\mathcal F}^4  } \left ( \frac{\rho}{\rho_0}\right )^{ \frac{6}{\tau^L} -2 }
    \int_{\Omega}
    |\nabla \varphi_0|^2 +\frac{1}{\rho_0^2}|\nabla w_0|^2 ,
\end{equation}
and then by Poincar\'{e} inequality, obtaining
\begin{equation}
  \label{LPS-p10-e3}
    \int_{B_{\rho}(x)}  E^2
    \geq
    \frac{C}{ {\mathcal F}^4  } \left ( \frac{\rho}{\rho_0}\right )^{ \frac{6}{\tau^L} -2 }
    \int_{\Omega}
    E^2 ,
\end{equation}
with $C>0$ only depending on $M_0$, $M_1$, $s_0$, $\alpha_0$,
$\alpha_1$, $\gamma_0$ and $ \frac{\rho_0}{h}$. Hence
\eqref{eq:LPS} holds for $\rho\leq \overline{\gamma} \rho_0$, with
$\overline{\gamma}$ depending on $M_0$, $M_1$, $s_0$, $\alpha_0$,
$\alpha_1$, $\gamma_0$ and $ \frac{\rho_0}{h}$. If $\rho\geq
\overline{\gamma} \rho_0$, then the thesis follows \textit{a
fortiori}.
\end{proof}

\medskip

In order to prove Proposition \ref{prop:dopo_dualità}, let us
introduce the following Lemma.
\begin{lem}
   \label{lem:prima_dualità}
Under the hypotheses of Proposition \ref{prop:dopo_dualità}, let
us assume that $\varphi_{|\partial \Omega}\in H^1(\partial \Omega,
\R^2)$ and $w_{|\partial \Omega}\in H^1(\partial \Omega)$. Then
there exists a positive constant $C$ only depending on
$M_0$, $M_1$,
$\alpha_0$, $\alpha_1$, $\gamma_0$, $\frac{\rho_0}{h}$, such that
\begin{equation}
  \label{eq:prima_dualità}
    \|\overline{M}\|_{L^{2}(\partial \Omega,\R^2)}+\rho_0 \|\overline{Q}\|_{L^{2}(\partial \Omega)}\leq C\rho_0^2
        \left(\|\varphi_0\|_{H^{1}(\partial \Omega,\R^2)}+\frac{1}{\rho_0} \|w_0\|_{H^{1}(\partial \Omega)}
        \right).
\end{equation}
\end{lem}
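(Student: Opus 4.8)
The plan is to read Lemma \ref{lem:prima_dualità} as a \emph{regularity} (Dirichlet-to-Neumann) estimate for the Reissner--Mindlin system on the Lipschitz domain $\Omega$: since $(\varphi_0,w_0)$ already solves \eqref{eq:intro-1}--\eqref{eq:intro-4} and, by assumption, has traces in $H^1(\partial\Omega)$, it is \emph{the} solution of the Dirichlet problem for the same system with those boundary data, and one must show that its conormal data $\overline M=(\mathbb{P}\nabla\varphi_0)n$ and $\overline Q=S(\varphi_0+\nabla w_0)\cdot n$ lie in $L^2(\partial\Omega)$ with the asserted bound. The first observation is that, by \eqref{eq:shearing-tensor}--\eqref{eq:bending-tensor}, the principal part of \eqref{eq:intro-1}--\eqref{eq:intro-4} is block diagonal: at second order, $w_0$ enters only through the scalar divergence-form operator $u\mapsto\divrg(S\nabla u)$ and $\varphi_0$ only through the isotropic elasticity operator $\phi\mapsto\divrg(\mathbb{P}\nabla\phi)$, while the shear term $S(\varphi_0+\nabla w_0)$ couples the two equations only through first- and zeroth-order contributions and belongs to $L^2(\Omega)$ because $(\varphi_0,w_0)\in H^1(\Omega,\R^2)\times H^1(\Omega)$. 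Both principal operators have Lipschitz coefficients and are elliptic by \eqref{eq:convex-S-Lame}, \eqref{eq:convex-P-Lame}.

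The core step is then to invoke the solvability of the $L^2$ regularity problem for these two operators on a Lipschitz domain: for Dirichlet data in $H^1(\partial\Omega)$ and right-hand side in $L^2(\Omega)$, the solution has gradient with nontangential maximal function in $L^2(\partial\Omega)$, controlled by the $H^1(\partial\Omega)$ norm of the data, the $L^2(\Omega)$ norm of the right-hand side, and the $H^1(\Omega)$ norm of the solution. I would apply this to the $w_0$-equation (right-hand side $-\divrg(S\varphi_0)\in L^2(\Omega)$, since $S$ is Lipschitz and $\varphi_0\in H^1(\Omega)$) and to the $\varphi_0$-equation (right-hand side $S(\varphi_0+\nabla w_0)\in L^2(\Omega)$), controlling the couplings through the energy estimate for the Reissner--Mindlin Dirichlet problem, which bounds $\|\varphi_0\|_{H^1(\Omega)}+\rho_0^{-1}\|w_0\|_{H^1(\Omega)}$ by the $H^1(\partial\Omega)$ norms of the boundary data (using that $H^1(\partial\Omega)$ data admit a bounded $H^1(\Omega)$ extension). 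This gives $\nabla\varphi_0$ and $\nabla w_0$ nontangential limits a.e.\ on $\partial\Omega$, lying in $L^2(\partial\Omega)$ with norm bounded (up to the dimensional weights) by $\|\varphi_0\|_{H^1(\partial\Omega)}+\rho_0^{-1}\|w_0\|_{H^1(\partial\Omega)}$. Since on $\partial\Omega$ one has pointwise $|\overline M|\le Ch^3|\nabla\varphi_0|$ and $|\overline Q|\le Ch(|\varphi_0|+|\nabla w_0|)$ by \eqref{eq:convex-S-Lame}--\eqref{eq:convex-P-Lame} (recall $|n|=1$), passing to $L^2(\partial\Omega)$-norms — and using $\|\varphi_0\|_{L^2(\partial\Omega)}\le C\|\varphi_0\|_{H^1(\partial\Omega)}$ directly — and book-keeping the normalizations of Remark \ref{rem:2.1} yields \eqref{eq:prima_dualità}.

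The main obstacle is exactly this regularity input: on a merely Lipschitz domain the conormal datum of $\varphi_0$ is not controlled by elementary trace arguments, and one must rely on the (nontrivial) solvability of the $L^2$ regularity problem for isotropic second-order elliptic systems on Lipschitz domains — classical for the scalar shearing operator $u\mapsto\divrg(S\nabla u)$ and, for the bending operator $\phi\mapsto\divrg(\mathbb{P}\nabla\phi)$, the regularity problem for planar isotropic elasticity — together with the stability of this property under the lower-order, divergence-form perturbation produced by the shear coupling (obtainable by a layer-potential/fixed-point argument or by a direct nontangential-maximal-function perturbation estimate). I expect the care-requiring part to be checking that the coupled Reissner--Mindlin operator inherits the regularity-problem estimate from its block-diagonal principal part, and that all constants depend only on $M_0$, $M_1$, $\alpha_0$, $\alpha_1$, $\gamma_0$ and $\rho_0/h$. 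Once Lemma \ref{lem:prima_dualità} is available, Proposition \ref{prop:dopo_dualità} follows by a routine duality argument: write $\|\overline M\|_{H^{-1}(\partial\Omega)}$ (and likewise $\|\overline Q\|_{H^{-1}(\partial\Omega)}$) as a supremum of $\int_{\partial\Omega}\overline M\cdot\phi$ over unit-norm $\phi\in H^1(\partial\Omega,\R^2)$, express this pairing through the symmetry of the bilinear form in \eqref{eq:weak_form} (Betti reciprocity) as the pairing of the conormal data of the Dirichlet solution with boundary datum $(\phi,0)$ against $(\varphi_0|_{\partial\Omega},w_0|_{\partial\Omega})$, and estimate those conormal data in $L^2(\partial\Omega)$ by Lemma \ref{lem:prima_dualità}.
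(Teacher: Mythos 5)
Your proposal identifies the right mechanism — a Rellich-type / $L^2$-regularity estimate that trades Dirichlet data in $H^1(\partial\Omega)$ for conormal data in $L^2(\partial\Omega)$, with the shear coupling absorbed as an $L^2(\Omega)$ source — and this is exactly what the paper does, but the execution is noticeably different in its level of abstraction. You propose to cite the solvability of the $L^2$-regularity problem on Lipschitz domains (nontangential-maximal-function estimates for the variable-coefficient scalar divergence operator and for planar isotropic elasticity) as a black box, then transfer it to the coupled system by a perturbation argument. The paper instead localizes with cut-offs and applies two explicit Rellich-type local lemmas from \cite{A-M-R02} and \cite{MR03-JE} (collected here as Lemma \ref{lem:exlemma5.2_4.3}), whose statements already carry the inhomogeneous term $|\nabla\widetilde{w}|\,|\mathrm{div}(S\nabla\widetilde{w})|$ (respectively the vector analogue). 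The key simplification the paper exploits — and that your sketch leaves implicit — is that the equations themselves let one rewrite the divergences of the principal terms as first-order expressions, $\mathrm{div}(S\nabla w)=-\mathrm{div}(S\varphi)$ and $\mathrm{div}(\mathbb{P}\nabla\varphi)=S(\varphi+\nabla w)$, both plainly in $L^2(\Omega)$ for an $H^1$ solution; with this substitution the Rellich lemmas apply directly, and the ``perturbation step'' you correctly flag as care-requiring disappears. This makes the paper's route self-contained and quantitative, with explicit dependence on $M_0,M_1,\alpha_0,\alpha_1,\gamma_0,\rho_0/h$, whereas your version would require verifying that the abstract regularity-problem results you invoke hold for Lipschitz coefficients, give the inhomogeneous form of the estimate, and have constants with the stated dependence. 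A further point you gloss over: the Rellich estimates are proved under extra smoothness ($H^{3/2}$) and then removed by an approximation argument, a step your sketch does not mention but which is needed because the a priori solution is only $H^1$. In short, the idea is the same, your route is viable in principle, but it outsources to heavier machinery what the paper accomplishes with two explicit local lemmas plus the order-lowering trick from the equations.
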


\begin{proof}[Proof of Proposition \ref{prop:dopo_dualità}]
For brevity, we shall write $\varphi$, $w$ instead of $\varphi_0$, $w_0$ respectively. Let us consider the standard Dirichlet-to-Neumann map
\begin{equation*}
\Lambda = \Lambda_{\mathbb P,S}:
H^{1/2}(\partial\Omega,\R^2)\times
H^{1/2}(\partial\Omega)\rightarrow
H^{-1/2}(\partial\Omega,\R^2)\times H^{-1/2}(\partial\Omega),
\end{equation*}
\begin{equation*}
\Lambda(g_1,g_2) = ((\mathbb P\nabla \varphi) n, S(\varphi +
\nabla w)\cdot n),
\end{equation*}
where $(\varphi,w)\in H^{1}(\Omega,\R^2)\times H^{1}(\Omega)$ is
the unique solution to the Dirichlet problem
\begin{center}
\( {\displaystyle \left\{
\begin{array}{lr}
     \mathrm{\divrg}( S(\varphi+\nabla w))=0
      & \mathrm{in}\ \Omega,
        \vspace{0.25em}\\
      \mathrm{\divrg}( {\mathbb P}  \nabla \varphi)-S(\varphi+\nabla w)=0, & \mathrm{in}\ \Omega,
          \vspace{0.25em}\\
      \varphi= g_1,
      & \mathrm{on}\ \partial \Omega,
        \vspace{0.25em}\\
      w = g_2, &\mathrm{on}\ \partial
      \Omega.
          \vspace{0.25em}\\
\end{array}
\right. } \) \vskip -7.5em
\begin{eqnarray}
& & \label{eq:Dirichlet1}\\
& & \label{eq:Dirichlet2}\\
& & \label{eq:Dirichlet3}\\
& & \label{eq:Dirichlet4}
\end{eqnarray}
\end{center}
Here the norm in the domain of $\Lambda$ is normalized by
\begin{equation*}
\|(g_1,g_2)\|_{H^{1/2}(\partial\Omega,\R^2)\times
H^{1/2}(\partial\Omega)} = \|g_1\|_{H^{1/2}(\partial\Omega,\R^2)}
+ \rho_0^{-1}  \|g_2\|_{H^{1/2}(\partial\Omega)}
\end{equation*}
and similar normalizations will be implied in the sequel for other
norms in the domain of $\Lambda$ and in the codomain of its
adjoint $\Lambda^*$, whereas the norm in the codomain of $\Lambda$
is normalized by
\begin{equation*}
\|(h_1,h_2)\|_{H^{-1/2}(\partial\Omega,\R^2)\times
H^{-1/2}(\partial\Omega)} =
\|h_1\|_{H^{-1/2}(\partial\Omega,\R^2)} + \rho_0
\|h_2\|_{H^{-1/2}(\partial\Omega)}
\end{equation*}
and similar normalizations will be implied in the sequel for other
norms in the codomain of $\Lambda$ and in the domain of its
adjoint $\Lambda^*$.

Let us set
\begin{equation*}
E = H^{1}(\partial\Omega,\R^2)\times H^{1}(\partial\Omega),\qquad
F = L^2(\partial\Omega,\R^2)\times L^2(\partial\Omega).
\end{equation*}

By Lemma \ref{lem:prima_dualità} we know that the map $\Lambda$
can be defined as a bounded linear operator with domain $E$ and
codomain $F$, precisely
\begin{equation}
   \label{eq:D-Nseconda}
\Lambda: E \rightarrow F,
\end{equation}
\begin{equation}
  \label{eq:D-N-E-F}
\|\Lambda(g_1,g_2)\|_F\leq C \rho_0^2 \|(g_1,g_2)\|_E,
\end{equation}
where we recall that the norms in $E$ and $F$, according to the
above convention, are  defined as follows
\begin{equation*}
\|(g_1,g_2)\|_E = \|g_1\|_{H^1(\partial\Omega,\R^2)} + \rho_0^{-1}
\|g_2\|_{H^{1}(\partial\Omega)},
\end{equation*}
\begin{equation*}
\|(h_1,h_2)\|_F = \|h_1\|_{L^2(\partial\Omega,\R^2)} + \rho_0
\|h_2\|_{L^2(\partial\Omega)}.
\end{equation*}
Let us consider the adjoint $\Lambda^*$ of the
Dirichlet-to-Neumann map
\eqref{eq:D-Nseconda}--\eqref{eq:D-N-E-F}. Since $F$ is a
reflexive space, the domain of the adjoint operator $D(\Lambda^*)$
can be estended by density to all of $F'$,
\begin{equation*}
\Lambda^*: F' \rightarrow E'
\end{equation*}
\begin{equation}
  \label{eq:adjoint}
<\Lambda^*(h_1,h_2),(g_1,g_2)>_{E',E} =
<(h_1,h_2),\Lambda(g_1,g_2)>_{F',F} \quad \forall (g_1,g_2) \in E,
\forall (h_1,h_2)\in F'.
\end{equation}

By \eqref{eq:D-N-E-F}--\eqref{eq:adjoint}, we have
\begin{equation}
  \label{eq:cont_adjoint}
\|\Lambda^*(h_1,h_2)\|_{E'} \leq C\rho_0^2 \|(h_1,h_2)\|_{F'}
\quad \forall (h_1,h_2)\in F',
\end{equation}
Given any $(h_1,h_2)\in E\subset F\cong F'$, let us consider the
unique weak solution to the Dirichlet problem
\begin{center}
\( {\displaystyle \left\{
\begin{array}{lr}
     \mathrm{\divrg}( S(\psi+\nabla v))=0
      & \mathrm{in}\ \Omega,
        \vspace{0.25em}\\
      \mathrm{\divrg}( {\mathbb P}  \nabla \psi)-S(\psi+\nabla v)=0, & \mathrm{in}\ \Omega,
          \vspace{0.25em}\\
      \psi= h_1,
      & \mathrm{on}\ \partial \Omega,
        \vspace{0.25em}\\
      v = h_2, &\mathrm{on}\ \partial
      \Omega.
          \vspace{0.25em}\\
\end{array}
\right. } \) \vskip -7.5em
\begin{eqnarray}
& & \label{eq:newDirichlet1}\\
& & \label{eq:newDirichlet2}\\
& & \label{eq:newDirichlet3}\\
& & \label{eq:newDirichlet4}
\end{eqnarray}
\end{center}
By using the weak formulation of problems
\eqref{eq:Dirichlet1}--\eqref{eq:Dirichlet4} and
\eqref{eq:newDirichlet1}--\eqref{eq:newDirichlet4}, by the
symmetry properties of $S$ and $\mathbb P$, see
\eqref{eq:anto-11.1}-\eqref{eq:anto-11.2}, and by identifying  the
reflexive space $F$ with its dual space $F'$, we have
\begin{multline}
  \label{eq:autoagg_conti}
<\Lambda^*(h_1,h_2),(g_1,g_2)>_{E',E} = <(h_1,h_2),\Lambda(g_1,g_2)>_{F',F}=\\
=\int_{\partial\Omega}h_1\cdot(\mathbb P(\nabla\varphi)) n + h_2
S(\varphi+\nabla w)\cdot n =
\int_{\partial\Omega}\psi\cdot (\mathbb P(\nabla\varphi)) n + v S(\varphi+\nabla w)\cdot n = \\
=\int_\Omega\mathbb P\nabla \varphi\cdot\nabla\psi +
S(\varphi+\nabla w)\cdot(\psi+\nabla v) =
\int_\Omega\mathbb P\nabla \psi\cdot\nabla\varphi + S(\psi+\nabla v)\cdot(\varphi+\nabla w) =\\
= \int_{\partial\Omega}\varphi\cdot (\mathbb P(\nabla\psi)) n + w
S(\psi+\nabla v)\cdot n = \int_{\partial\Omega}g_1\cdot(\mathbb
P(\nabla\psi)) n + g_2 S(\psi+\nabla v)\cdot n,
\end{multline}
that is
\begin{equation}
  \label{eq:cont_adjoint2}
<\Lambda^*(h_1,h_2), (g_1,g_2)>_{E',E} =
<\Lambda(h_1,h_2),(g_1,g_2)>_{F',F}  \quad \forall (h_1,h_2),
(g_1,g_2)\in E.
\end{equation}
Therefore
\begin{equation}
   \label{eq:oper_coinc_aggiunto}
\Lambda^*(h_1,h_2) = \Lambda(h_1,h_2), \qquad \forall (h_1,h_2)\in
E\subset F\cong F'.
\end{equation}
By \eqref{eq:cont_adjoint}, we have
\begin{equation}
  \label{eq:cont_adjoint3}
\|\Lambda(h_1,h_2)\|_{H^{-1/2}(\partial\Omega,\R^2)\times
H^{-1/2}(\partial\Omega)} \leq C\rho_0^2
\|(h_1,h_2)\|_{L^{2}(\partial\Omega,\R^2)\times
L^{2}(\partial\Omega)}  \quad \forall (h_1,h_2)\in E.
\end{equation}
Since $E$ is dense in $L^{2}(\partial\Omega,\R^2)\times
L^{2}(\partial\Omega)$, the above inequality extends to
\begin{equation}
  \label{eq:cont_adjoint4}
\|\Lambda(h_1,h_2)\|_{H^{-1}(\partial\Omega,\R^2)\times
H^{-1}(\partial\Omega)} \leq C\rho_0^2
\|(h_1,h_2)\|_{L^2(\partial\Omega,\R^2)\times
L^2(\partial\Omega)},
\end{equation}
for every $(h_1,h_2)\in L^{2}(\partial\Omega,\R^2)\times
L^{2}(\partial\Omega)$.

\end{proof}

In order to derive Lemma \ref{lem:prima_dualità}, we need to
premise some notation and two auxiliary lemmas which were proved
in \cite{A-M-R02} and in \cite{MR03-JE} respectively.

Given the notation for the local representation of the boundary of
$\Omega$ introduced in Definition \ref{def:2.1}, let us set, for
$t<\rho_0$,
\begin{equation*}
R_t^+ = \Omega \cap R_{t,M_0t} = \{x=(x_1,x_2) \in \R^2 \ | \
|x_1|<t, \psi(x_1)<x_2 <M_0t \},
\end{equation*}
\begin{equation*}
\Delta_t = \{x=(x_1,x_2) \in \R^2 \ | \ |x_1|<t, x_2 =
\psi(x_1)\}.
\end{equation*}

The following Lemma is a straightforward consequence of Lemma 5.2
in \cite{A-M-R02} and of Lemma 4.3 in \cite{MR03-JE}, which were
established in general anisotropic setting.

\begin{lem}
   \label{lem:exlemma5.2_4.3}
Let $S \in
C^{0,1}(\overline{\Omega}, {\M}^{2})$ and
$\mathbb P \in C^{0,1}
(\overline{\Omega}, {\cal L} ({\M}^{2}, {\M}^{2}))$ given by
\eqref{eq:shearing-tensor},
\eqref{eq:bending-tensor} respectively,
with Lam\'e moduli satisfying
\eqref{eq:Lame-ell},
\eqref{eq:Lame-reg}.

For every $\widetilde{w} \in H^{3/2}(R_{\rho_0}^{+})$ such that ${\rm div}(S \nabla
 \widetilde{w}) \in L^{2}(R_{\rho_0}^{+})$ and $\widetilde{w}=| \nabla \widetilde{w} |=0$ on
  $\partial R_{\rho_0}^{+} \setminus \Delta_{\rho_0}$, we have
\begin{equation}
  \label{eq:ex6.8}
  \int_{\Delta_{\rho_0/2}} | S \nabla \widetilde{w} \cdot n|^{2}
  \leq
  C
  \left (h^2
  \int_{\Delta_{\rho_0}} | \nabla_{T} \widetilde{w}|^{2}
  +\frac{1}{\rho_0}
  \int_{R_{\rho_0}^{+}} h^2 | \nabla \widetilde{w}|^{2}
  +h\rho_0
  | \nabla \widetilde{w}||{\rm div}(S \nabla \widetilde{w})|
  \right
  ),
\end{equation}
where $C>0$ only depends on $M_0$, $\alpha_0$ and $\alpha_1$.

For every $\widetilde{\varphi} \in H^{3/2}(R_{\rho_0}^{+},\R^2)$ such that ${\rm div}(\mathbb P \nabla
 \widetilde{\varphi}) \in L^{2}(R_{\rho_0}^{+},\R^2)$ and $|\widetilde{\varphi}|=| \nabla \widetilde{\varphi} |=0$ on
  $\partial R_{\rho_0}^{+} \setminus \Delta_{\rho_0}$, we have
\begin{equation}
  \label{eq:ex6.9}
  \int_{\Delta_{\rho_0/2}} | (\mathbb P \nabla \widetilde{\varphi}) n|^{2}
  \leq
  C
  \left (h^6
  \int_{\Delta_{\rho_0}} | \nabla_{T} \widetilde{\varphi}|^{2}
  +\frac{1}{\rho_0}
  \int_{R_{\rho_0}^{+}} h^6 | \nabla \widetilde{\varphi}|^{2}
  +\rho_0h^3
  | \nabla \widetilde{\varphi}||{\rm div}(\mathbb P \nabla \widetilde{\varphi})|
  \right
  ),
\end{equation}
where $C>0$ only depends on $M_0$, $\alpha_0$, $\alpha_1$ and $\gamma_0$.
\end{lem}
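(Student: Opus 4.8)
The plan is to obtain \eqref{eq:ex6.8} and \eqref{eq:ex6.9} by directly specializing the two cited results to the plate operators: the scalar estimate should follow from Lemma~5.2 of \cite{A-M-R02}, applied to the second order operator $\mathrm{div}(S\nabla\,\cdot\,)$ with $S$ the \emph{scalar} coefficient of \eqref{eq:shearing-tensor}, while the vector estimate should follow from Lemma~4.3 of \cite{MR03-JE}, applied to the (anisotropic) elasticity-type operator $\mathrm{div}(\mathbb{P}\nabla\,\cdot\,)$ with $\mathbb{P}$ the bending tensor of \eqref{eq:bending-tensor}. Since both cited lemmas are stated for general coefficients that are Lipschitz on $\overline{\Omega}$, uniformly strongly elliptic, and (in the tensor case) equipped with the minor and major symmetries, the only real work is to check that these hypotheses hold for $S$ and $\mathbb{P}$, to rescale the coefficients so as to extract the correct powers of $h$, and to verify that the resulting constants depend only on the parameters claimed.

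First I would verify the hypotheses. For $S$: by \eqref{eq:shearing-tensor} we have $S=h\mu\in C^{0,1}(\overline{\Omega})$, with ellipticity $S\geq h\alpha_0$ from \eqref{eq:convex-S-Lame} and Lipschitz bound $\|S\|_{C^{0,1}(\overline{\Omega})}\leq h\alpha_1$ from \eqref{eq:lip_operatori}; writing $S=h\mu$ and invoking the dimensional normalization of Remark~\ref{rem:2.1}, the dimensionless coefficient $\mu$ has ellipticity constant $\alpha_0$ and Lipschitz constant $\alpha_1$, so the constant produced by Lemma~5.2 of \cite{A-M-R02} depends only on $M_0$, $\alpha_0$, $\alpha_1$. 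For $\mathbb{P}$: by \eqref{eq:bending-tensor} and the isotropic form, $\mathbb{P}$ automatically satisfies the minor and major symmetries, belongs to $C^{0,1}(\overline{\Omega})$, is uniformly strongly convex with $\mathbb{P}A\cdot A\geq\frac{h^3}{12}\xi_0|\widehat{A}|^2$ from \eqref{eq:convex-P-Lame} (recall $\xi_0=\min\{2\alpha_0,\gamma_0\}$), and satisfies $\|\mathbb{P}\|_{C^{0,1}(\overline{\Omega})}\leq Ch^3$ from \eqref{eq:lip_operatori}; thus $\mathbb{P}=h^3\overline{\mathbb{P}}$ with $\overline{\mathbb{P}}$ a dimensionless isotropic tensor whose ellipticity and Lipschitz constants depend only on $\alpha_0$, $\alpha_1$, $\gamma_0$, so the constant produced by Lemma~4.3 of \cite{MR03-JE} depends only on $M_0$, $\alpha_0$, $\alpha_1$, $\gamma_0$.

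Next I would track the powers of $h$. In the scalar case the quantity controlled on the left is $\int_{\Delta_{\rho_0/2}}|S\nabla\widetilde{w}\cdot n|^2=h^2\int_{\Delta_{\rho_0/2}}|\mu\nabla\widetilde{w}\cdot n|^2$, while each term on the right of the abstract Rellich-type inequality of \cite{A-M-R02} carries exactly one factor $(\max S)^2$ (the boundary tangential term and the interior term) or one factor $\max S$ (the term pairing $|\nabla\widetilde{w}|$ with the source term), with $\max S\sim h$; dividing out $h^2$ reproduces the dimensionless Rellich identity for $\mathrm{div}(\mu\nabla\,\cdot\,)$, and multiplying back yields precisely the prefactors $h^2$, $h^2/\rho_0$ and $h\rho_0$ displayed in \eqref{eq:ex6.8} after the $\alpha_1$–dependence has been absorbed into $C$. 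The same bookkeeping with $\mathbb{P}=h^3\overline{\mathbb{P}}$ turns the abstract inequality of \cite{MR03-JE} into \eqref{eq:ex6.9}, with prefactors $h^6$, $h^6/\rho_0$ and $\rho_0 h^3$; here one also notes that $\mathrm{div}(\mathbb{P}\nabla\widetilde{\varphi})$ is the standard (anisotropic) linearized elasticity operator, since $\mathbb{P}$ depends on $\nabla\widetilde{\varphi}$ only through $\widehat{\nabla}\widetilde{\varphi}$, so Lemma~4.3 of \cite{MR03-JE} applies verbatim.

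The main obstacle is not analytical but is the bookkeeping of homogeneity: after the rescalings $S=h\mu$ and $\mathbb{P}=h^3\overline{\mathbb{P}}$ one must make sure that the constants coming from the two cited lemmas depend on $h$ only through the explicit prefactors in \eqref{eq:ex6.8}--\eqref{eq:ex6.9}, and not on $h$ or $\rho_0$ separately; this is exactly what the dimensional normalization convention of Remark~\ref{rem:2.1} is designed to guarantee, and it is also the reason why only the tangential gradients $\nabla_T\widetilde{w}$ and $\nabla_T\widetilde{\varphi}$ on $\Delta_{\rho_0}$ (rather than the full gradients) appear on the right-hand sides, these being inherited unchanged from the cited statements together with the boundary conditions $\widetilde{w}=|\nabla\widetilde{w}|=0$, $|\widetilde{\varphi}|=|\nabla\widetilde{\varphi}|=0$ on $\partial R_{\rho_0}^{+}\setminus\Delta_{\rho_0}$.
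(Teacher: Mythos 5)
Your proposal is correct and follows the same route as the paper, which simply states that the lemma is ``a straightforward consequence of Lemma 5.2 in \cite{A-M-R02} and of Lemma 4.3 in \cite{MR03-JE}'' and specializes those two Rellich-type estimates (scalar and elasticity) to the Reissner--Mindlin tensors $S=h\mu$ and $\mathbb P$ of \eqref{eq:shearing-tensor}--\eqref{eq:bending-tensor}. Your additional verification of ellipticity, Lipschitz regularity, and symmetry, together with the bookkeeping $S\sim h$, $\mathbb P\sim h^3$ to produce the prefactors $h^2,h^2/\rho_0,h\rho_0$ and $h^6,h^6/\rho_0,\rho_0 h^3$, is precisely the implicit content of that ``straightforward consequence''.
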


\begin{proof}[Proof of Lemma \ref{lem:prima_dualità}]
We follow the lines of the proof of Proposition 5.1 in
\cite{A-M-R02}. As a first step, we assume that $\varphi \in
H^{3/2}(R_{\rho_0}^{+},\R^2)$ and $w \in H^{3/2}(R_{\rho_0}^{+})$.
Let us consider a cut-off function in $\R^{2}$
\begin{equation}
  \label{eq:cut1}
  \eta(x_1,x_{2})=\chi(x_1)\tau(x_{2}),
\end{equation}
where
\begin{equation}
  \label{eq:cut2}
  \chi \in C_{0}^{\infty} ( \R), \quad \chi(x_1)\equiv 1
  \ \textrm{if} \  |x_1|\leq\frac{\rho_0}{2}, \quad \chi(x_1)\equiv 0 \
  \textrm{if} \  |x_1|\geq\frac{3}{4}\rho_0,
\end{equation}
\begin{equation}
  \label{eq:cut3}
  \| \chi' \|_{\infty} \leq C_{1}
  \rho_0^{-1}, \quad
  \| \chi''\|_{\infty} \leq C_{1}
  \rho_0^{-2},
\end{equation}
\begin{equation}
  \label{eq:cut4}
  \tau \in C_{0}^{\infty} ( \R), \quad \tau( x_{2}) \equiv 1
  \ \textrm{if} \  |x _{2} |\leq\frac{M_0\rho_0}{2}, \quad \tau(x_{2} )\equiv 0 \
  \textrm{if} \  |x _{2}|\geq\frac{3}{4}M_0\rho_0,
\end{equation}
\begin{equation}
  \label{eq:cut5}
  \| \tau'\|_{\infty} \leq C_{2}
  \rho_0^{-1}, \quad
  \| \tau'' \|_{\infty} \leq C_{2}
  \rho_0^{-2},
\end{equation}
where $C_{1}$ is an absolute constant and $C_{2}$ is a constant
only depending on $M_0$.

Let
\begin{equation*}
    \widetilde{w} =\eta w,
\end{equation*}
\begin{equation*}
    \widetilde{\varphi} =\eta \varphi.
\end{equation*}
In view of equations \eqref{eq:intro-1}--\eqref{eq:intro-2}, it
will be useful in the sequel to rewrite ${\rm div}(S \nabla w)$ in
terms of first derivatives of $\varphi$ and ${\rm div}(\mathbb P
\nabla \varphi)$ in terms of first derivatives of $w$ and in terms
of $\varphi$
\begin{equation}
    \label{eq:abbasso_deriv_w}
 \mathrm{\divrg}(S\nabla w)=-  \mathrm{\divrg}(S\varphi),
\end{equation}
\begin{equation}
   \label{eq:abbasso_deriv_varphi}
      \mathrm{\divrg}({\mathbb P}\nabla \varphi) = S(\varphi+\nabla
      w).
\end{equation}
By \eqref{eq:abbasso_deriv_w}, it follows that ${\rm div}(S \nabla
 \widetilde{w}) \in L^{2}(R_{\rho_0}^{+})$ and by \eqref{eq:abbasso_deriv_varphi}, it follows that
${\rm div}(\mathbb P \nabla
 \widetilde{\varphi}) \in L^{2}(R_{\rho_0}^{+},\R^2)$. Therefore we can apply estimates \eqref{eq:ex6.8} and \eqref{eq:ex6.9} of Lemma
\ref{lem:exlemma5.2_4.3} to $\widetilde{w}$ and
$\widetilde{\varphi}$, respectively. Taking into account
\eqref{eq:cut1}--\eqref{eq:abbasso_deriv_varphi} we easily obtain
\begin{multline}
  \label{eq:stima_w}
  \int_{\Delta_{\rho_0/2}} | S \nabla w \cdot n|^{2}
  \leq \\
  Ch^2
  \left [
  \int_{\Delta_{\rho_0}} \left(| \nabla_{T} w|^{2} +\frac{w^2}{\rho_0^2}\right)
  +\frac{1}{\rho_0}
  \int_{R_{\rho_0}^{+}} \left(| \nabla w|^{2} + \frac{w^2}{\rho_0^2} +
    \rho_0^2 | \nabla \varphi|^{2} + |\varphi|^2
  \right)
  \right
  ],
\end{multline}
\begin{multline}
  \label{eq:stima_varphi}
  \int_{\Delta_{\rho_0/2}} | \mathbb P \nabla \varphi \cdot n|^{2}
  \leq \\
  Ch^6
  \left [
  \int_{\Delta_{\rho_0}} \left(| \nabla_{T} \varphi|^{2} +\frac{|\varphi|^2}{\rho_0^2}\right)
  +\frac{1}{\rho_0}
  \int_{R_{\rho_0}^{+}} \left(| \nabla \varphi|^{2} + \frac{|\varphi|^2}{\rho_0^2} +
    \frac{|\nabla w|^2}{\rho_0^2}
  \right)
  \right
  ],
\end{multline}
where $C>0$ only depends on $M_0$, $\alpha_0$, $\alpha_1$ and
$\gamma_0$.

By \eqref{eq:stima_w} and \eqref{eq:stima_varphi} we have
\begin{multline}
  \label{eq:stima_w+varphi}
  \int_{\Delta_{\rho_0/2}} | \mathbb P \nabla \varphi \cdot n|^{2}
    +\rho_0^2|S(\varphi+\nabla w)\cdot n|^2
  \leq \\
  Ch^6
  \left [
  \int_{\Delta_{\rho_0}} \left(| \nabla_{T} \varphi|^{2} +\frac{|\varphi|^2}{\rho_0^2}
    + \frac{|\nabla_{T} w|^{2}}{\rho_0^2}+ \frac{w^{2}}{\rho_0^4}\right)
  +\frac{1}{\rho_0}
  \int_{R_{\rho_0}^{+}} \left(| \nabla \varphi|^{2} + \frac{|\varphi|^2}{\rho_0^2} +
    \frac{w^{2}}{\rho_0^4} + \frac{|\nabla w|^2}{\rho_0^2}
  \right)
  \right
  ],
\end{multline}
where $C>0$ only depends on $M_0$, $\alpha_0$, $\alpha_1$ and
$\gamma_0$.

The hypotheses $\varphi \in H^{3/2}(R_{\rho_0}^{+},\R^2)$, $w \in
H^{3/2}(R_{\rho_0}^{+})$ can be removed by following the lines of
the approximation argument used in Step 3 of \cite[Lemma
5.2]{A-M-R02} and \cite[Lemma 4.3]{MR03-JE} respectively,
obtaining again \eqref{eq:stima_w+varphi}. Finally, by
\eqref{eq:stima_w+varphi} and the well-posedness of the Dirichlet
problem \eqref{eq:Dirichlet1}--\eqref{eq:Dirichlet4}, inequality
\eqref{eq:prima_dualità} follows.

\end{proof}

\bigskip
\bibliographystyle{plain}

\end{document}